\renewcommand{\(}{\left\(}
\renewcommand{\)}{\right\)}
\renewcommand{\[}{\left\[}
\renewcommand{\]}{\right\]}
\numberwithin{equation}{section}
\theoremstyle{plain}
\newtheorem{theorem}{Theorem}[section]
\newtheorem{lemma}[theorem]{Lemma}
\newtheorem{corollary}[theorem]{Corollary}
\def\proof{\@ifnextchar[{\@oproof}{\@nproof}}
\def\@oproof[#1][#2]{\trivlist\item[\hskip\labelsep\textit{#2 Proof of\
		#1.}~]\ignorespaces}
\def\@nproof{\trivlist\item[\hskip\labelsep\textit{Proof.}~]\ignorespaces}
\begin{document}
	
	\title[]{Asymptotics and inequalities for the distinct
partition function}
	
		
	\maketitle
	
\vskip 0.2cm

\begin{center}
{Gargi Mukherjee}$^{1}$, {Helen W.J. Zhang}$^{2}$ and
  {Ying Zhong}$^{2}$ \vskip 2mm
$^{1}$School of Mathematical Sciences\\[2pt]
National Institute of Science Education and Research, Bhubaneswar, An OCC of Homi Bhabha National Institute,  Via- Jatni, Khurda, Odisha- 752050, India\\[5pt]

$^{2}$School of Mathematics\\[2pt]
Hunan University, Changsha 410082, P.R. China\\[5pt]

Email: gargi@niser.ac.in, \quad helenzhang@hnu.edu.cn, \quad YingZhong@hnu.edu.cn
\end{center}

	\begin{abstract}
In this paper, we give explicit error bounds for the asymptotic
expansion of the shifted distinct partition function $q(n +s)$ for any nonnegative integer $s$. Then based on this refined asymptotic formula, we give the exact thresholds of $n$ for the inequalities derived from the invariants of the quartic binary form, the double Tur\'{a}n inequalities, the Laguerre inequalities and their corresponding companion versions.

\vskip 6pt

\noindent {\bf AMS Classifications}: 05A16, 05A20, 11P82
\\ [7pt]
{\bf Keywords}: Distinct partition function, Asymptotics, Invariants of the quartic binary form, Tur\'{a}n inequalities, Laguerre inequalities
	\end{abstract}
	
\section{Introduction}\label{intro}
The objective of this paper is to derive an asymptotic expansion for the distinct partition function and further show that
this function satisfies the inequalities derived from the invariants of the quartic binary form, the double Tur\'{a}n inequalities, the Laguerre inequalities and their companion versions.

A sequence $\{\alpha_n\}_{n\geq0}$ is said to satisfy the double Tur\'{a}n inequalities if for $n\geq2$,
\begin{equation*}
\left(\alpha_n^2- \alpha_{n-1}\alpha_{n+1}\right)^2
\geq \left(\alpha_{n-1}^2- \alpha_{n-2}\alpha_{n}\right)
\left(\alpha_{n+1}^2- \alpha_{n}\alpha_{n+2}\right).
\end{equation*}
Additionally, the sequence satisfies Laguerre inequalities of order $m$ if
\begin{equation*}
\frac 12\sum_{k=0}^{2m}(-1)^{k+m} {{2m}\choose k}\alpha_{n+k}\alpha_{2m-k+n} \geq 0.
\end{equation*}
It should be noted that the study of the Tur\'{a}n-type and Laguerre inequalities is closely related to the Laguerre-P\'{o}lya class and the Riemann hypothesis, see, for example,  \cite{Csordas-Varga-1990,Dimitrov-1998,Dimitrov-Lucas-2011,
	Schur-Polya-1914,Szego-1948}. Consequently, this area has received widespread attention. Many scholars have been investigating the Tur\'{a}n-type and their companion inequalities \cite{Chen-Jia-Wang-2019,DeSalvo-Pak-2015,Griffin-Ono-Rolen-Zagier-2019}, as well as the Laguerre and corresponding companion inequalities \cite{Wagner-2022,Wang-Yang-2022,Wang-Yang-2024} for different
kind of sequences.

A comprehensive study on inequalities arising from invariants of a binary form was studied by Chen \cite{Chen-2017}. For the background
on the theory of invariants, see, for example, Hilbert \cite{Hilbert-1993}, Kung and Rota
\cite{Kung-Rota} and Sturmfels \cite{Sturmfels}. A binary form $P_d(x,y)$ of degree $d$ is a homogeneous polynomial of degree $d$ in two variables $x$ and $y$, defined by
$$P_d(x,y):=\sum_{i = 0}^{d}\binom{d}{i}a_ix^{i}y^{d - i},$$
where the coefficients $a_i$ are complex numbers. Restricting $a_i$ to be real numbers, the binary form $P_d(x,y)$ is transformed into a new binary form
$$Q_d(\bar{x},\bar{y})=\sum_{i = 0}^{d}{d \choose i}c_i\bar{x}^{i}\bar{y}^{d - i}$$
under the action of an invertible complex matrix $M = \begin{pmatrix}m_{11}&m_{12}\\m_{21}&m_{22}\end{pmatrix}$ as follows:
$$\begin{pmatrix}x\\y\end{pmatrix}=M\begin{pmatrix}\bar{x}
\\\bar{y}\end{pmatrix}.$$
The transformed coefficients $c_i$ are polynomials in $a_i$ and $m_{ij}$. For nonnegative
integer $k$, a polynomial $I(a_0,a_1,\cdots,a_d)$ in the coefficients $(a_i)_{0\leq i\leq d}$ is called an invariant of index of $k$ of the binary form $P_d(x,y)$ if for any invertible matrix $M$,
$$I(\bar{a}_0,\bar{a}_1,\cdots,\bar{a}_d)=(\text{det }M)^{k}I(a_0,a_1,\cdots,a_d).$$
We observe that
three invariants of the quartic binary form
$$P_4(x,y)=a_4x^{4}+4a_3x^{3}y + 6a_2x^{2}y^{2}+4a_1xy^{3}+a_0y^{4}$$
are of the following form
\begin{align*}
A(a_0,a_1,a_2,a_3,a_4)&=a_0a_4-4a_1a_3 + 3a_2^{2},
\\
B(a_0,a_1,a_2,a_3,a_4)&=-a_0a_2a_4+a_2^{3}+a_0a_3^{2}
+a_1^{2}a_4-2a_1a_2a_3,
\\
I(a_0,a_1,a_2,a_3,a_4)&=A(a_0,a_1,a_2,a_3,a_4)^3-27B(a_0,a_1,a_2,a_3,a_4)^2.
\end{align*}
Chen \cite{Chen-2017} conjectured that both the partition function and the spt-function satisfy the inequalities derived from the invariants of the quartic binary form and the associated companion inequalities. For
the definition of spt-function, we
refer to Andrews \cite{Andrews-2008}. Chen's conjectures related to $A(a_0,a_1,a_2,a_3,a_4)$ and $B(a_0,a_1,a_2,a_3,a_4)$ have recently been proved by Banerjee \cite{Banerjee-2025}, Jia
and Wang \cite{Jia-Wang-2020} and Wang and Yang \cite{Wang-Yang-2022}. 	

Recently, inequalities for the distinct partition function have also received significant attention from many researchers. Recall that a distinct partition of integer $n$ is
a partition of $n$ into distinct parts. For example, there are eight distinct partitions of $9$:
\begin{equation*}
  (9), (8, 1), (7, 2), (6, 3), (6, 2, 1), (5, 4), (5, 3, 1), (4, 3, 2).
\end{equation*}
Let $q(n)$ denote the number of
distinct partitions of $n$.
Hagis \cite{Hagis-1963} and Hua \cite{Hua-1942} established a Hardy-Ramanujan-Rademacher type formula
for $q(n)$ by applying the Hardy-Ramanujan circle method. Applying this formula, Craig and Pun \cite{Craig-Pun-2021} showed that $q(n)$ satisfies the Tur\'{a}n inequalities of any order for sufficiently large $n$ by using  a general result of Griffin, Ono, Rolen, and Zagier \cite{Griffin-Ono-Rolen-Zagier-2019}. They also conjectured that $q(n)$ is log-concave for $n\geq33$ and satisfies the third-order Tur\'{a}n inequalities for $n\geq121$.
Instead of using the
Hardy-Ramanujan-Rademacher type formula for $q(n)$ due to Hagis \cite{Hagis-1963} and Hua \cite{Hua-1942}, Dong and Ji \cite{Dong-Ji-2024} used Chern's asymptotic formulas for $\eta$-quotients \cite{Chern-2019} to obtain an asymptotic formula for $q(n)$
with an effective bound on the error term. Based on this asymptotic formula, they proved Craig and Pun's \cite{Craig-Pun-2021} conjectures.
But they couldn't prove the inequalities pertaining to the invariants of the quartic binary form for $q(n)$ and stated them as conjectures \cite[Conjecture 6.1]{Dong-Ji-2024}.
In this paper, we derive an asymptotic
expansion of the shifted distinct partition function $q(n +s)$ for any nonnegative integer $s$. Subsequently, we prove the first two conjectures of Dong and Ji \cite{Dong-Ji-2024} and establish their associated companion inequalities.

To state our asymptotic expansion, we need to introduce some definitions. Here and throughout, we use the notation $f(n)=O_{\leq C}(g(n))$ to mean $|f(x)|\leq C\cdot g(x)$ for a positive function $g$ and the domain for $x$ will be specified accordingly.
For $(s,N) \in \mathbb{N}_0\times\mathbb{N}$, we define
\begingroup
\allowdisplaybreaks
\begin{align}
\label{n(N,s)}
n(N,s)&:=\max\left\{206,\frac{\frac{72}{\pi^2}N^2_0(N+2)-1}{24}, \left \lceil \frac{2(24s+1)}{3} \right \rceil\right\},\\\nonumber
Er_N(s)&:=|a_N(1)|\left(\frac{\pi\cdot2^{N-1}}{\sqrt{3}}
\sqrt{\frac{24s+1}{24}}+A(s) \left(1+\frac{2^{N+1}}{3}\right) \right)\Bigl(\frac{24s+1}{24}\Bigr)^{\frac{N+1}{2}}\\\label{finalerrordef} &\hspace{-1.5 cm}+\left(1+\frac{\pi}{2\sqrt{3}}\frac{24s+1}{24}+ \frac{A(s)}{12} \right) Er_6(N,s)+2 |a_N(1)| Er_4(N,s)+\frac{Er_4(N,s)Er_6(N,s)}{n(N,s)^{\frac{N+1}{2}}},
\end{align}
\endgroup
where $a_m(v), Er_3(N,s), Er_4(N,s), Er_6(N,s), \text{and}\ A(s)$ are as defined in \eqref{a}, \eqref{eqn3}, \eqref{Er_{4}(N,s)}, \eqref{eqn6}, and \eqref{eqn8} respectively.

\begin{theorem}\label{MainTh}
	For all $(s, N) \in \mathbb{N}_0\times\mathbb{N}$ and $n \ge n(N,s)$, we have
	\begin{equation*}
	q(n+s)=\frac{e^{\pi\sqrt{\frac{n}{3}}}}{4\cdot3^{1/4}n^{3/4}}\Biggl(\sum_{m=0}^{N}\frac{\widehat{B}_m(s)}{n^{\frac m2}}+O_{\le Er_N(s)}\Bigl(n^{-\frac{N+1}{2}}\Bigr)\Biggr),
	\end{equation*}
	where the coefficient sequence $\left(\widehat{B}_m(s)\right)_{m\ge 0}$ is given explicitly in \eqref{finalcoeffdef}.
\end{theorem}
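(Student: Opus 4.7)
The plan is to combine a near-exact Bessel-function representation of $q(n+s)$ (of the sort produced by the circle method in Chern's form, as used by Dong and Ji) with the classical large-argument expansion of the modified Bessel function $I_1$, and then to re-expand everything as a formal power series in $n^{-1/2}$ while tracking every remainder. The first step is to write $q(n+s)=\mathrm{Main}(n+s)+\mathcal{E}(n+s)$, where $\mathrm{Main}(n+s)$ is the principal contribution from the cusp at infinity, expressible in terms of $I_1$ evaluated at an argument of the form $\tfrac{\pi}{6}\sqrt{24(n+s)+1}$, and $\mathcal{E}(n+s)$ collects the remaining cusps. The quantities $Er_3(N,s)$ and $Er_6(N,s)$ from \eqref{eqn3} and \eqref{eqn6} are the explicit bounds on $\mathcal{E}$ and on its companion tail obtained at this stage.

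Next I would apply the classical asymptotic
$$I_1(z)=\frac{e^z}{\sqrt{2\pi z}}\left(\sum_{m=0}^{N-1}\frac{(-1)^m a_m(1)}{z^m}+R_N(z)\right),\qquad |R_N(z)|\le\frac{|a_N(1)|}{z^N},$$
valid once $z$ is past an $N$-dependent threshold; this threshold is exactly what produces the summand $\bigl(\tfrac{72}{\pi^2}N_0^2(N+2)-1\bigr)/24$ inside $n(N,s)$. Plugging in $z=\tfrac{\pi}{6}\sqrt{24(n+s)+1}$, factoring $e^{\pi\sqrt{n/3}}/(4\cdot 3^{1/4} n^{3/4})$ out of $\mathrm{Main}(n+s)$, and writing $\sqrt{24(n+s)+1}=\sqrt{24n}\,(1+\tfrac{24s+1}{24n})^{1/2}$, I would Taylor-expand both the algebraic factor $(1+\tfrac{24s+1}{24n})^{\pm 1/2}$ and the residual exponential factor $\exp\!\bigl(\pi\sqrt{n/3}\bigl[\sqrt{1+\tfrac{24s+1}{24n}}-1\bigr]\bigr)$ in powers of $n^{-1/2}$. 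The exponent of the last factor tends to a finite limit as $n\to\infty$, so its Taylor expansion has uniform remainder bounds with explicit constants of the type collected in $Er_4(N,s)$. Multiplying these truncated series yields the claimed $\sum_{m=0}^{N}\widehat{B}_m(s)n^{-m/2}$ with coefficients given by \eqref{finalcoeffdef}.

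The final step is error bookkeeping. By the triangle inequality and the standard product-of-truncated-series estimate $\bigl|\bigl(S_1+\rho\bigr)\bigl(S_2+\sigma\bigr)-S_1 S_2\bigr|\le|\rho|\,|S_2|+|\sigma|\,|S_1|+|\rho\sigma|$, the total error decomposes into precisely the four summands appearing in \eqref{finalerrordef}: the Bessel tail with leading coefficient $|a_N(1)|$ after re-expansion of its argument; the subleading-cusp contribution $Er_6(N,s)$ multiplied by the factor $1+\tfrac{\pi}{2\sqrt{3}}\cdot\tfrac{24s+1}{24}+\tfrac{A(s)}{12}$ that absorbs the $A(s)$- and $(24s+1)$-dependent corrections; the cross-term $|a_N(1)|\,Er_4(N,s)$ between the Bessel tail and the Taylor re-expansion; and the second-order cross-term $Er_4(N,s)\,Er_6(N,s)/n(N,s)^{(N+1)/2}$. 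The thresholds $206$ and $\lceil 2(24s+1)/3\rceil$ in $n(N,s)$ are chosen so that $(24s+1)/(24n)$ stays small enough for the binomial remainders to be geometrically bounded and so that the auxiliary inequalities used in estimating the subleading cusps remain valid.

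The main obstacle is precisely this bookkeeping. No individual step is deep — the Bessel asymptotic, the binomial expansion, and Chern's cusp-by-cusp formula are all classical — but producing one clean closed-form bound $Er_N(s)$ that is uniform in $N\ge 1$ and $s\ge 0$, and that has the specific shape of \eqref{finalerrordef}, requires the various truncations to be performed in a compatible order and the triangle inequality to be applied so as to preserve both the explicit polynomial dependence on $s$ and the $2^N$-type dependence on $N$. Keeping the $N$-dependence linear in $2^N$, rather than in a strictly worse base, is the subtlest point and is what forces the specific form of the re-expansion of the Bessel argument.
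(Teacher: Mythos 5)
Your overall route is the one the paper actually takes: start from the Dong--Ji representation $q(n)=\frac{\sqrt{2}\pi^2}{12\nu(n)}I_1(\nu(n))+R(n)$ with an explicit bound on $R(n)$, insert the explicit asymptotic expansion of $I_1$, pull out $e^{\pi\sqrt{n/3}}/(4\cdot3^{1/4}n^{3/4})$, Taylor-expand the shift-dependent factors in powers of $n^{-1/2}$, and multiply truncated series while tracking remainders. But as written the proposal has two substantive problems.

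First, the computational core is missing, and for this theorem that core is the content. The statement asserts a specific coefficient sequence $\widehat{B}_m(s)$ and a specific constant $Er_N(s)$, and nothing in your sketch produces either. Re-expanding $\exp\bigl(\pi\sqrt{n/3}\bigl[\sqrt{1+\tfrac{24s+1}{24n}}-1\bigr]\bigr)$ as a series in $n^{-1/2}$ requires exchanging a triple sum and collapsing the inner sum $\sum_{s_1}(-1)^{s_1}\binom{r_1}{s_1}\binom{s_1/2}{m_1}$ via the identity of \cite[Lemma 3.3]{BPRS}; that is what yields the closed forms for $B_{2k}(s)$ and $B_{2k+1}(s)$, and only after bounding these by $\sinh$ and $\cosh$ of $\pi\sqrt{(24s+1)/72}$ do the geometric tail estimates produce $Er_2,Er_4,Er_5,Er_6$ and ultimately $Er_N(s)$. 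Asserting that the triangle inequality "decomposes into precisely the four summands of \eqref{finalerrordef}" without these computations is not a proof of the stated bound.

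Second, several of your attributions are wrong in ways that would derail the bookkeeping. $Er_3(\tfrac34,N,s)$ is not a bound on the other-cusp contribution; it is the tail of the binomial expansion of $(1+\tfrac{24s+1}{24n})^{-3/4}$. $Er_6(N,s)$ is not "the subleading-cusp contribution" either; it is the error in rewriting the finite Bessel sum $\sum_m \frac{(-1)^m a_m(1)}{(\pi/\sqrt3)^m n^{m/2}}(1+\tfrac{24s+1}{24n})^{-m/2}$ as a series in $n^{-1/2}$, combined with the Bessel remainder $Er_{N,1}$ and the multiplicative factor $1+O_{\le 4}\bigl(3^{(N+1)/2}\pi^{-N-1}n^{-(N+1)/2}\bigr)$ that encodes the cusp error. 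The summand $\bigl(\tfrac{72}{\pi^2}N_0^2(N+2)-1\bigr)/24$ in $n(N,s)$ arises from forcing $G(n)\le 4/\nu(n)^{N+1}$ in Lemma \ref{q-asym-lem}, i.e., from the circle-method error $R(n)$, not from a validity threshold of the $I_1$ asymptotic; and the Bessel remainder actually used is Banerjee's $Er_{N,1}/x^{N+1}$, not $|a_N(1)|/z^{N}$. Finally, the first summand of \eqref{finalerrordef} comes from the overflow terms of total degree between $N+1$ and $2N$ in the product of the two truncated series $\sum_m\overline{B}_m(s)n^{-m/2}$ and $\sum_m\widehat{C}_m(s)n^{-m/2}$, not from "the Bessel tail after re-expansion of its argument". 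These identifications need to be corrected before the error constant your argument yields could be matched to $Er_N(s)$.
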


In light of Theorem \ref{MainTh}, we obtain the following inequalities by taking appropriate values of $N$.

\begin{theorem}\label{A-thm}
	For $n\geq 230$, we have
	\begin{equation*}
	A\left(q(n-1),q(n), q(n+1), q(n+2), q(n+3)\right)>0.
	\end{equation*}
\end{theorem}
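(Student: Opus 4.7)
The plan is to reduce the claim to a statement about the asymptotic expansion in Theorem \ref{MainTh} applied at a common base point. Since $A(a_0,a_1,a_2,a_3,a_4) = a_0a_4 - 4a_1a_3 + 3a_2^2$ is homogeneous of degree $2$, I would fix the base point $n$ (equivalently, view $q(n+s)$ for $s \in \{-1,0,1,2,3\}$, or shift indices by $1$ to use $s \in \{0,1,2,3,4\}$) and apply the theorem to each $q(n+s)$ with the \emph{same} exponential prefactor $\frac{e^{\pi\sqrt{n/3}}}{4\cdot 3^{1/4}n^{3/4}}$. The common prefactor squared then factors out of $A(q(n-1),\ldots,q(n+3))$, reducing the inequality to showing positivity of
\[
\mathcal{A}_N(n) \;:=\; A\bigl(S_{-1}(n),S_0(n),S_1(n),S_2(n),S_3(n)\bigr),
\]
where $S_s(n) = \sum_{m=0}^{N}\widehat{B}_m(s)\,n^{-m/2} + R_{N,s}(n)$ and $|R_{N,s}(n)| \le Er_N(s)\,n^{-(N+1)/2}$.

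The heart of the argument is an expansion of $\mathcal{A}_N(n)$ as a finite series in $n^{-1/2}$. The leading terms (those coming from products $\widehat{B}_0(s)\widehat{B}_0(s')$) must cancel, because $\widehat{B}_0(s)$ is essentially constant in $s$ and the specialization $(a_0,a_1,a_2,a_3,a_4) = (c,c,c,c,c)$ annihilates $A$. In fact the vanishing is stronger: any constant specialization sits inside the kernel of $A$ to leading order, so one needs to expand several coefficients deep to locate the first non-zero homogeneous component. Here I would compute the explicit polynomial in $s$ contributed by $\widehat{B}_m(s)$ (cf.~\eqref{finalcoeffdef}) and plug into $A$; after the cancellations one isolates the first surviving coefficient, show by direct calculation that it is a \emph{positive} multiple of a fixed negative power $n^{-k/2}$, and verify that no lower-order negative coefficient intervenes.

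Once the leading positive term $c\,n^{-k/2}$ of $\mathcal{A}_N(n)$ is identified, I would choose $N$ just large enough that the aggregate remainder, obtained by expanding $A(S_{-1}+R_{-1},\ldots,S_3+R_3) - A(S_{-1},\ldots,S_3)$ and bounding each cross term using the triangle inequality together with $|S_s(n)| \le $ an explicit constant times $|\widehat{B}_0(s)|$, is strictly dominated by $c\,n^{-k/2}$ for $n$ above the Theorem \ref{MainTh} threshold $n(N,s)$. The explicit form of $Er_N(s)$ in \eqref{finalerrordef} makes this a finite numerical check: one obtains an explicit $n_0$ beyond which the inequality holds. For the remaining finite range $230 \le n < n_0$, I would verify $A(q(n-1),\ldots,q(n+3)) > 0$ directly from tabulated or recurrence-generated values of $q(n)$.

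The main obstacle will be choosing $N$ to align the analytic threshold $n_0$ with the stated bound $230$: taking $N$ too small leaves the leading term of $\mathcal{A}_N(n)$ too weak to dominate the sum of error contributions at such small $n$, while taking $N$ too large makes $Er_N(s)$, which grows with $|a_N(1)|\cdot 2^N$, itself grow, pushing $n_0$ up. The calibration of $N$ (and consequently the range needing manual verification) is the real work, together with the somewhat tedious polynomial bookkeeping needed to confirm that the first surviving coefficient in the $A$-combination is positive and to obtain a clean lower bound on it.
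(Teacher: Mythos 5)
Your proposal is correct and follows essentially the same route as the paper: apply Theorem \ref{MainTh} with a common base point so the prefactor squared factors out of the degree-$2$ homogeneous invariant $A$, reduce to a polynomial inequality in $n^{-1/2}$ with the explicit error bounds $Er_N(s)$ absorbed (the paper does this via the lower/upper sandwich $L(n,s,N)\le\cdot\le U(n,s,N)$ rather than your triangle-inequality bookkeeping, which is equivalent), establish that inequality for all $n$ above an explicit threshold by a finite symbolic computation (the paper takes $N=14$ and uses \texttt{Reduce}, obtaining validity for $n\ge 5019$), and verify the remaining finite range down to $n=230$ numerically. The only cosmetic difference is that you propose to isolate the first surviving coefficient by hand, whereas the paper delegates the entire truncated polynomial inequality to computer algebra.
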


\begin{theorem}\label{A-r-thm}
	For $n\geq 279$, we have
	\begin{equation*}
	4\left(1+\frac{\pi^2}{32n^3}\right)q(n)q(n+2)>q(n-1)q(n+3)+3q(n+1)^2.
	\end{equation*}
\end{theorem}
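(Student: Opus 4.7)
The plan is first to recast Theorem~\ref{A-r-thm} as an upper bound on the quartic invariant $A$. From the trivial identity
\[
q(n-1)q(n+3)+3q(n+1)^2=A\bigl(q(n-1),q(n),q(n+1),q(n+2),q(n+3)\bigr)+4q(n)q(n+2),
\]
the target inequality is equivalent to
\[
A\bigl(q(n-1),q(n),q(n+1),q(n+2),q(n+3)\bigr)<\frac{\pi^2}{8n^3}\,q(n)q(n+2),
\]
so, combined with Theorem~\ref{A-thm}, the assertion amounts to trapping $A$ in a two-sided envelope $0<A<\tfrac{\pi^2}{8n^3}\,q(n)q(n+2)$ for all $n\ge 279$.

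Next I apply Theorem~\ref{MainTh} with base $n-1$ and $s\in\{0,1,2,3,4\}$ to obtain a common asymptotic expansion in $(n-1)^{-1/2}$ of each factor $q(n-1),\ldots,q(n+3)$ with explicit remainder $Er_N(s)$. The value of $N$ is chosen just large enough that the first non-vanishing coefficient in the expansion of $\tfrac{\pi^2}{8n^3}q(n)q(n+2)-A$ dominates the total error; since that leading non-trivial order will turn out to be $n^{-7/2}$, one takes $N\ge 7$. Substituting into both sides and factoring out the common prefactor $\dfrac{e^{2\pi\sqrt{(n-1)/3}}}{16\sqrt{3}\,(n-1)^{3/2}}$ reduces the inequality to a polynomial inequality in $(n-1)^{-1/2}$ whose coefficients are built from products $\widehat{B}_{m_1}(s_1)\widehat{B}_{m_2}(s_2)$.

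The core asymptotic computation is most transparent if one works locally around $n+1$: setting $\psi(k):=\log\bigl(q(n+1+k)/q(n+1)\bigr)$, one has
\[
\frac{A}{q(n+1)^2}=e^{\psi(-2)+\psi(2)}-4\,e^{\psi(-1)+\psi(1)}+3,\qquad \frac{\pi^2}{8n^3}\cdot\frac{q(n)q(n+2)}{q(n+1)^2}=\frac{\pi^2}{8n^3}\,e^{\psi(-1)+\psi(1)}.
\]
Expanding the exponentials and using the derivatives of $\log q$ that one reads off from Theorem~\ref{MainTh}, the $n^{-3}$ contributions of the two sides coincide exactly (both equal $\pi^2/(8n^3)$); at the next order $n^{-7/2}$ the left-hand side contributes $0$ while the right-hand side contributes a strictly negative quantity. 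Consequently $\tfrac{\pi^2}{8n^3}q(n)q(n+2)-A$ is asymptotically a positive multiple of $q(n+1)^2\,n^{-7/2}$. Coupling this with the explicit remainder $Er_N(s)$ produces an effective $N_0$ beyond which the inequality provably holds, and the remaining range $279\le n<N_0$ is verified by direct computation of $q(n)$ from its generating function $\prod_{k\ge 1}(1+x^k)$.

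The main obstacle is the tightness of the inequality: the $n^{-3}$ coefficients cancel exactly, and the positive $n^{-7/2}$ remainder is quantitatively modest, so the subleading corrections at orders $n^{-4}$ and $n^{-9/2}$, together with the error $Er_N(s)$, must all be bounded with enough precision that the effective threshold $N_0$ remains small enough for the finite check $279\le n<N_0$ to be feasible. Concretely this requires evaluating the first several $\widehat{B}_m(s)$ from~\eqref{finalcoeffdef} for all five shifts $s\in\{0,1,2,3,4\}$ and tracking them carefully through the $A$-polynomial and through $q(n)q(n+2)$, so that the positive leading residue at order $n^{-7/2}$ is not masked by the remainder before $n$ reaches $279$.
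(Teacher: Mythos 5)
Your proposal follows essentially the same route as the paper: substitute the explicit expansion of Theorem \ref{MainTh} (with its error bound $Er_N(s)$) into each shifted factor, reduce to a polynomial inequality in $n^{-1/2}$ via the two-sided bounds $L(n,s,N)\le q(n+s)\le U(n,s,N)$, extract an effective threshold, and verify the finite range by computer; the paper does exactly this with $N=14$, obtaining the reduced inequality for $n\ge 5885$ and checking $278\le n\le 5884$ after the shift $n\to n+1$. One internal inconsistency in your leading-order analysis: in the $\psi$-expansion it is the $A$-side (your left-hand side), not the right-hand side, that carries the strictly negative $n^{-7/2}$ contribution (coming from $f^{(4)}$ and the cross term of $f''$ with the $-\tfrac34\log n$ correction, where $f=\log q$), while the right-hand side first deviates from $\pi^2/(8n^3)$ only at order $n^{-9/2}$; your stated conclusion that the difference is asymptotically a positive multiple of $q(n+1)^2\,n^{-7/2}$ is nonetheless correct, but as written the two assertions contradict each other. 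Also note that taking $N\ge 7$ is likely too optimistic for $Er_N(s)$ to be dominated at a computationally feasible threshold --- the paper needed $N=14$ --- though you do flag this issue.
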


\begin{theorem}\label{B-thm}
	For $n\geq 272$, we have
	\begin{equation*}
	B\left(q(n-1),q(n), q(n+1), q(n+2), q(n+3)\right)>0.
	\end{equation*}
\end{theorem}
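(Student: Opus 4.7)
The plan is to substitute the asymptotic expansion from Theorem \ref{MainTh} into $B$ and exploit the fact that $B(a_0,\dots,a_4)$ equals (up to sign) the determinant of the $3\times 3$ Hankel matrix $(a_{i+j})_{0 \le i,j \le 2}$, so $B$ vanishes identically on every geometric progression.

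The first step is to apply Theorem \ref{MainTh} at base point $n-1$ with $s\in\{0,1,2,3,4\}$, writing $q(n-1+s)=F(n-1)\,g_s(n-1)$, where $F(x):=e^{\pi\sqrt{x/3}}/(4\cdot 3^{1/4}x^{3/4})$ and $g_s(x):=\sum_{m=0}^{N}\widehat{B}_m(s)x^{-m/2}+O_{\le Er_N(s)}(x^{-(N+1)/2})$. Since $B$ is homogeneous of degree $3$, factoring out the positive prefactor $F(n-1)^3$ reduces the problem to showing
\[
\mathcal{B}(n):=B\bigl(g_0(n-1),g_1(n-1),g_2(n-1),g_3(n-1),g_4(n-1)\bigr)>0
\]
for all $n\ge 272$. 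The structural observation that drives the argument is that the leading-in-$s$ term of $\widehat{B}_m(s)$, namely the monomial $(\pi/(2\sqrt{3}))^m s^m/m!$ produced by the exponential factor in $F(n+s)/F(n)$, combines across $m$ into the geometric progression $g_s^{\textup{lead}}=e^{\pi s u/(2\sqrt{3})}$ with $u:=(n-1)^{-1/2}$. Because $B$ vanishes on geometric progressions, every coefficient of $u^k$ in $\mathcal{B}(n)$ that is produced entirely by these leading-in-$s$ pieces must cancel; the first nonzero coefficient $C_\ell u^\ell$ can only be fed by subleading pieces of $\widehat{B}_m(s)$, stemming from the intrinsic expansion $q(n)/F(n)=1+c_1 n^{-1/2}+c_2 n^{-1}+\cdots$, from the lower-$s$-degree parts of the exponential expansion, and from the $(1+s/n)^{-3/4}$ factor.

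With this picture, the remaining work is to identify $\ell$ and compute $C_\ell$ from the explicit formula \eqref{finalcoeffdef}, verify $C_\ell>0$, choose $N$ large enough that $n(N,s)\le 272$ for each $s\in\{0,\dots,4\}$ via \eqref{n(N,s)} and that the truncation error $Er_N(s)(n-1)^{-(N+1)/2}$ from Theorem \ref{MainTh} is dominated by $C_\ell(n-1)^{-\ell/2}$ for $n\ge 272$, and then dominate the intermediate tail of $\mathcal{B}(n)$ by absolute bounds on $|\widehat{B}_m(s)|$. The main obstacle is arithmetic: the Hankel-determinant cancellations persist through several orders of $u$ (one verifies that the coefficients of $u^0,u^1,\dots,u^6$ all vanish by the geometric-progression structure combined with the vanishing of $dB|_{(1,\dots,1)}$ and suitable polarization identities for $B$, for example $\widetilde{B}(\mathbf{1},\mathbf{1},X)=0$ and $\widetilde{B}(\mathbf{1},s,X)=0$ for every $X$), so one must carry out the polarization calculus carefully to pin down $\ell$ and its sign. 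Once $C_\ell$ is in hand, the inequality reduces to an explicit scalar bound whose solution gives the threshold $n\ge 272$; a short numerical check for $n$ slightly below may be needed to confirm sharpness.
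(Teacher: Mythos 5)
Your skeleton---apply Theorem \ref{MainTh} with $s\in\{0,\dots,4\}$, factor out the cube of the positive prefactor by homogeneity, reduce to a polynomial inequality in $n^{-1/2}$, and finish with a finite verification---is the same as the paper's, which forms the explicit bound \eqref{ineq3} from \eqref{Maintheqn1} (keeping $L$'s on the positive monomials of $B$ and $U$'s on the negative ones) and discharges it with \texttt{Reduce} for $N=24$. Your Hankel-determinant observation (that $B=-\det\bigl((a_{i+j})_{0\le i,j\le 2}\bigr)$ vanishes on geometric progressions) is correct and is a genuinely illuminating explanation of \emph{why} the low-order coefficients cancel, something the paper leaves implicit in the computer algebra.

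That said, two concrete gaps remain. (i) The cancellation runs deeper than you track: the coefficients of $u^0,\dots,u^8$ all vanish and the first surviving term is of order $u^{9}$ (this is visible in Theorem \ref{B-r-thm}, whose companion factor is $1+\pi^3/(288\sqrt{3}\,n^{9/2})$). Stopping the polarization analysis at $u^6$ leaves the decisive steps---identifying $\ell=9$, computing and sign-checking $C_9$, and dominating the truncation error $Er_N(s)\,n^{-(N+1)/2}$ by $C_9\,n^{-9/2}$, which already forces $N\ge 9$---entirely unperformed, and these steps are the whole content of the proof. (ii) More seriously, the closing claim that the scalar bound ``gives the threshold $n\ge 272$'' with only ``a short numerical check for $n$ slightly below'' cannot be made to work. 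Theorem \ref{MainTh} is valid only for $n\ge n(N,s)$, and by \eqref{n(N,s)} this is at least $\frac{(72/\pi^2)N_0(N+2)^2-1}{24}$, which is already about $1.8\times 10^3$ for the minimal usable $N=9$ and is about $18502$ for the paper's choice $N=24$. The asymptotic argument therefore cannot reach down to $n=272$; the stated threshold is produced by explicitly verifying every $n$ between the threshold and $n(N,s)$ (the paper checks $271\le n\le 18501$ after its shift $n\to n+1$). That large finite verification is an essential component of the proof, not a marginal afterthought.
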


\begin{theorem}\label{B-r-thm}
	For $n\geq 309$, we have
	\begin{align*}
	&\left(1+\frac{\pi^3}{288\sqrt{3}n^{\frac{9}{2}}}\right)
	\left(2q(n)q(n+1)q(n+2)+q(n-1)q(n+1)q(n+3)\right)
	\\
	&\hspace{5cm}>q(n+1)^3+q(n-1)q(n+2)^2+q(n)^2q(n+3).
	\end{align*}
\end{theorem}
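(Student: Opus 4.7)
The plan is to substitute the asymptotic expansion of $q(n+s)$ from Theorem \ref{MainTh} (applied for $s\in\{-1,0,1,2,3\}$) into both sides. Writing $g(n):=\frac{e^{\pi\sqrt{n/3}}}{4\cdot 3^{1/4}n^{3/4}}$ and
\begin{equation*}
X:=2q(n)q(n+1)q(n+2)+q(n-1)q(n+1)q(n+3),\qquad Y:=q(n+1)^3+q(n-1)q(n+2)^2+q(n)^2q(n+3),
\end{equation*}
the claim is equivalent to $B:=Y-X<\tfrac{\pi^3}{288\sqrt{3}\,n^{9/2}}X$. I would fix a truncation order $N$ large enough that the cubic-product remainder is $o(g(n)^3 n^{-9/2})$; the bound (\ref{finalerrordef}) together with the crude estimate $q(n+s)\le 2g(n)\widehat{B}_0(s)$ applied to the remaining two factors of each triple product controls the propagation. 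Both $X$ and $Y$ then become $g(n)^3$ times a polynomial in $n^{-1/2}$ whose coefficients are explicit polynomial expressions in $\{\widehat{B}_m(s)\}$.

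The structural heart of the argument is that $B(a_0,\ldots,a_4)=-a_0a_2a_4+a_2^3+a_0a_3^2+a_1^2a_4-2a_1a_2a_3$, as an invariant of the quartic binary form, vanishes identically on both geometric sequences $a_i=\alpha r^{i}$ and arithmetic sequences $a_i=\alpha+\beta i$. Consequently $B(1,1,1,1,1)=0$, $\nabla B|_{(1,1,1,1,1)}=0$, and the Hessian quadratic form at $(1,1,1,1,1)$ annihilates the span of $(1,1,1,1,1)$ and $(0,1,2,3,4)$. Because Theorem \ref{MainTh} forces $\widehat{B}_0(s)\equiv 1$ and $\widehat{B}_1(s)$ linear in $s$, these vanishings cancel every contribution to $B/g(n)^3$ of order $n^{-k/2}$ with $k\le 8$; the first surviving term arises at order $n^{-9/2}$. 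A convenient bookkeeping uses the logarithmic expansion $\log\bigl(q(n+s)/q(n)\bigr)=\sum_{k\ge 1}A_k(n)\,s^{k}$ with $A_k(n)\sim n^{1/2-k}$, from which one sees that the leading nonzero contribution is driven by $A_2^{3}$ (with $A_2=-\pi/(16\sqrt{3}n^{3/2})+O(n^{-2})$) and evaluates to a positive constant $c_B$ times $g(n)^3 n^{-9/2}$. Direct computation shows $c_B<\frac{\pi^3}{96\sqrt{3}}$. Since $X\sim 3g(n)^{3}$, this yields
\begin{equation*}
\frac{\pi^3}{288\sqrt{3}\,n^{9/2}}X-B\;\sim\;\Bigl(\frac{\pi^3}{96\sqrt{3}}-c_B\Bigr)\frac{g(n)^{3}}{n^{9/2}}\;>\;0
\end{equation*}
asymptotically.

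To convert this into the explicit threshold $n\ge 309$, the error constant $Er_N(s)$ from (\ref{finalerrordef}) produces an effective $n_0$ beyond which the asymptotic bound is rigorous; the residual finite range $n(N,s)\le n<n_0$ is handled by direct numerical evaluation of $q(n)$, e.g.\ through the recurrence arising from $\prod_{k\ge 1}(1+x^{k})$. The main obstacle is the cancellation cascade inside $B$: because every $n^{-k/2}$-level for $k\le 8$ must cancel identically before a nonzero contribution appears, the expansion has to be carried to high order and the cubic-product error bounds must be sharp enough not to absorb the surviving residual $\bigl(\frac{\pi^3}{96\sqrt{3}}-c_B\bigr)g(n)^{3}n^{-9/2}$. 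This forces $N$ to be chosen generously, while a secondary task is keeping the admissibility threshold $n(N,s)$ small enough that the numerical sweep over small $n$ remains feasible.
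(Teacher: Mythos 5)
Your proposal follows essentially the same route as the paper: substitute the two-sided effective expansion $L(n,s,N)\le q(n+s)/g(n)\le U(n,s,N)$ coming from Theorem \ref{MainTh}, reduce the claim to a polynomial inequality in $n^{-1/2}$ decided at high truncation order (the paper uses $N=24$ and Mathematica's \texttt{Reduce}), and sweep the remaining finite range numerically; your invariant-theoretic explanation of why everything cancels through order $n^{-4}$ and the first surviving term is $\asymp A_2^3\sim n^{-9/2}$ is correct and illuminating but is not needed once the polynomial inequality is checked mechanically. One small repair is required: Theorem \ref{MainTh} is stated only for $s\in\mathbb{N}_0$, so you cannot invoke it at $s=-1$; the paper instead shifts $n\to n+1$ so that the five shifts are $s\in\{0,1,2,3,4\}$, and then absorbs the resulting factor $\bigl(1+\frac{\pi^3}{288\sqrt{3}(n+1)^{9/2}}\bigr)$ via the elementary bound $\frac{\pi^3}{288\sqrt{3}(n+1)^{9/2}}\ge\frac{\pi^3}{288\sqrt{3}n^{9/2}}-\frac{1}{4n^5}$ before passing to the $L$/$U$ inequality \eqref{ineq4}.
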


\begin{theorem}\label{d-turan}
	For $n\geq 273$, $q(n)$ satisfies the double Tur\'{a}n inequalities, i.e.,
	\begin{align*}
	\left({q(n)}^2-q(n-1)q(n+1)\right)^2
	>\left({q(n-1)}^2-q(n-2)q(n)\right)\left({q(n+1)}^2-q(n)q(n+2)\right).
	\end{align*}
\end{theorem}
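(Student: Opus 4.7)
The plan is to mirror the strategy that Theorems~\ref{A-thm}--\ref{B-r-thm} already establish for this paper: feed the refined asymptotic of Theorem~\ref{MainTh} into the double Tur\'an expression, reduce to a polynomial comparison in $n^{-1/2}$, and close the remaining short range by direct computation. Concretely, I would apply Theorem~\ref{MainTh} with base point $n-2$ and shifts $s \in \{0,1,2,3,4\}$, choosing a cutoff $N$ large enough (I anticipate $N$ around $10$--$14$) for the truncation error to be dominated by the leading nonzero term of the main part. This gives, uniformly in $k\in\{-2,-1,0,1,2\}$,
\[
q(n+k)=\frac{e^{\pi\sqrt{(n-2)/3}}}{4\cdot 3^{1/4}(n-2)^{3/4}}\Bigl(P_{N,k}(n)+\varepsilon_k(n)\Bigr),
\]
where $P_{N,k}(n)=\sum_{m=0}^{N}\widehat{B}_m(k+2)(n-2)^{-m/2}$ and $|\varepsilon_k(n)|\le Er_N(k+2)(n-2)^{-(N+1)/2}$.

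Next I would factor out the common prefactor $\Phi(n):=\bigl(\tfrac{e^{\pi\sqrt{(n-2)/3}}}{4\cdot 3^{1/4}(n-2)^{3/4}}\bigr)^{4}$ from both sides of the desired inequality, so that the target becomes $\Phi(n)^{-1}D(n)>0$ where $D(n)$ is the difference of the two sides. Each of the three Tur\'an-type blocks $q(n)^2-q(n-1)q(n+1)$, $q(n-1)^2-q(n-2)q(n)$, $q(n+1)^2-q(n)q(n+2)$ expands to a polynomial in $(n-2)^{-1/2}$ whose lowest order begins at $(n-2)^{-3/2}$ (the familiar discrete-second-difference cancellation), so the squared factor on the left and the product on the right both start at order $(n-2)^{-3}$. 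Assembling the combination $\Phi(n)^{-1}D(n)=\mathcal{P}(n)+\mathcal{E}(n)$, I would compute $\mathcal{P}(n)$ symbolically in the $\widehat{B}_m(s)$, identify the first power of $(n-2)^{-1/2}$ that survives, and verify that its coefficient is strictly positive. The error remainder $\mathcal{E}(n)$ is controlled by the triangle inequality from the quantities $Er_N(s)$ of \eqref{finalerrordef}.

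It then remains to find an explicit threshold $n_{0}$ such that $\mathcal{P}(n)>|\mathcal{E}(n)|$ for all $n\ge n_{0}$; this is a single polynomial inequality in $(n-2)^{-1/2}$ whose largest real root can be located numerically. For $273\le n<n_{0}$ I would verify the double Tur\'an inequality directly, computing $q(n-2),\dots,q(n+2)$ via Euler's pentagonal-number recurrence; because the claimed threshold $n\ge 273$ is tight, this finite check must also confirm the sharpness.

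The main obstacle will be the second step: the triple cancellation in the double Tur\'an expression means that many low-order coefficients among the $\widehat{B}_m(s)$ vanish identically, so one must push the expansion well beyond the naive order $(n-2)^{-3}$ to detect the first strictly positive contribution, and must then balance this against $Er_N(s)$, which itself grows with $N$ through $|a_N(1)|$ and $n(N,s)$. Getting $N$ just large enough that the surviving positive main term beats the truncation error at a threshold not far above $273$ (so that the numerical check in Step~4 stays finite and tractable) is the delicate part; the remaining bookkeeping, although lengthy, is mechanical and identical in spirit to the proofs of Theorems~\ref{A-thm}--\ref{B-r-thm}.
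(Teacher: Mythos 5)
Your proposal is essentially the paper's own argument: after shifting so that all five values become $q(n+s)$ with $s\in\{0,\dots,4\}$, the paper substitutes the lower/upper bounds $L(n,s,N)$ and $U(n,s,N)$ from Theorem~\ref{MainTh} (your $P_{N,k}\pm\varepsilon_k$), reduces to the polynomial inequality \eqref{ineq5} with $N=14$, resolves it via Mathematica's \texttt{Reduce} for $n\ge 3153$ (effective from $n\ge 5019$ because of $n(14,s)$), and checks the remaining finite range directly. The only cosmetic difference is that you propose to isolate the first surviving coefficient by hand rather than handing the whole polynomial comparison to \texttt{Reduce}.
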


\begin{theorem}\label{c-d-turan}
	For $n\geq 346$, $q(n)$ satisfies the companion double Tur\'{a}n inequalities, i.e.,
	\begin{align*}
	&\left({q(n)}^2-q(n-1)q(n+1)\right)^2
	\\
	&\qquad\qquad<\left({q(n-1)}^2-q(n-2)q(n)\right)
	\left({q(n+1)}^2-q(n)q(n+2)\right)\left(1+\frac{\pi}{2\sqrt{3}n^{\frac 32}}\right).
	\end{align*}
\end{theorem}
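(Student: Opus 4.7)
The inequality to be established is equivalent to
$$F(n)^{2} - G(n)H(n) < \frac{\pi}{2\sqrt{3}\,n^{3/2}}\, G(n)H(n),$$
where $F(n) := q(n)^{2} - q(n-1)q(n+1)$, $G(n) := q(n-1)^{2} - q(n-2)q(n)$, and $H(n) := q(n+1)^{2} - q(n)q(n+2)$. Theorem \ref{d-turan} already supplies the lower bound $F(n)^{2} - G(n)H(n) \ge 0$ for $n \ge 273$, so the remaining task is to produce a matching upper bound of the correct order of magnitude.

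The plan is to invoke Theorem \ref{MainTh} with an index $N$ large enough to resolve the Tur\'{a}n-type cancellations together with the additional $n^{-3/2}$ correction factor. Concretely, I would expand $q(n+s)$ for $s\in\{-2,-1,0,1,2\}$ via
$$q(n+s)=\frac{e^{\pi\sqrt{n/3}}}{4\cdot 3^{1/4}n^{3/4}}\Biggl(\sum_{m=0}^{N}\frac{\widehat{B}_{m}(s)}{n^{m/2}}+O_{\le Er_{N}(s)}\Bigl(n^{-(N+1)/2}\Bigr)\Biggr),$$
substitute into $F$, $G$, $H$, and extract the common prefactor $e^{2\pi\sqrt{n/3}}/(16\sqrt{3}\,n^{3/2})$. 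A lengthy but mechanical symbolic manipulation then produces truncated power-series expansions of $F(n)^{2}$ and $G(n)H(n)$ in $n^{-1/2}$, each accompanied by an explicit tail bound inherited from the $Er_{N}(s)$ terms. The content of Theorem \ref{d-turan} is precisely that the first few coefficients of $F(n)^{2}$ and $G(n)H(n)$ coincide exactly, so their difference starts at a specific power of $n^{-1/2}$; it is at this point that the refinement furnished by Theorem \ref{MainTh} becomes indispensable.

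The decisive step is then a direct coefficient comparison: one verifies that the leading surviving coefficient of $F(n)^{2}-G(n)H(n)$ is strictly smaller than $\pi/(2\sqrt{3})$ times the leading coefficient of $G(n)H(n)/n^{3/2}$. Combined with an explicit polynomial upper bound on the next-order remainder, this yields a computable threshold $n_{0}$ beyond which the inequality holds unconditionally. The finite interval $346\le n< n_{0}$ is then handled by direct numerical computation of $q(n)$, for instance through Euler's recurrence.

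The main obstacle is precision management. Since $F$, $G$, and $H$ are each already small through Tur\'{a}n-type cancellation, while $F^{2}-GH$ is smaller still and the desired bound is smaller yet by a further factor of $n^{-3/2}$, the remainders $Er_{N}(s)$ from Theorem \ref{MainTh} must eventually dominate a quantity lying many orders below the leading asymptotics of $q(n)^{2}$. This forces $N$ to be taken noticeably larger than in the proof of Theorem \ref{d-turan}, and requires careful propagation of the error bound through the squaring, pairwise multiplication, and subtraction steps, so that the resulting $n_{0}$ remains small enough to be reached from $n=346$ by a finite numerical check.
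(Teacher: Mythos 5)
Your proposal follows essentially the same route as the paper: apply Theorem \ref{MainTh} with a suitable $N$ to obtain explicit upper and lower bounds $U(n,s,N)$ and $L(n,s,N)$ for each $q(n+s)$, reduce the companion inequality to an explicit inequality in powers of $n^{-1/2}$ with controlled remainders, determine a threshold by symbolic computation (the paper takes $N=14$, the same order as for Theorem \ref{d-turan}, and obtains $n\ge 7056$ after shifting), and settle the remaining finite range numerically. The one adjustment you need is that Theorem \ref{MainTh} is stated only for $s\in\mathbb{N}_0$, so rather than expanding $q(n+s)$ for $s=-1,-2$ you must first shift $n\to n+2$; the paper then replaces the factor $\pi/(2\sqrt{3}(n+1)^{3/2})$ by the smaller $\pi/(2\sqrt{3}\,n^{3/2})-1/n^{2}$ so that the whole computation stays in terms of $n$.
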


More recently, similar to the general result of Griffin, Ono, Rolen, and Zagier \cite{Griffin-Ono-Rolen-Zagier-2019} for the Tur\'an inequalities of any order, Wang and Yang \cite{Wang-Yang-2024} gave a criteria for the Laguerre inequalities of any order. It is worth noting that if a sequence satisfies the conditions of Griffin, Ono, Rolen, and Zagier \cite{Griffin-Ono-Rolen-Zagier-2019}, then it also satisfies
the conditions of Wang and Yang \cite{Wang-Yang-2024}.
Craig and Pun \cite{Craig-Pun-2021} have showed that $q(n)$ satisfies the criteria of Griffin, Ono, Rolen, and Zagier \cite{Griffin-Ono-Rolen-Zagier-2019}. This directly implies the following result.

\begin{corollary}
	For sufficiently large $n$, $q(n)$ satisfies the Laguerre inequalities of any order.
\end{corollary}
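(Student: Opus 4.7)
The plan is to derive the corollary as a direct consequence of the Wang-Yang criterion \cite{Wang-Yang-2024}, fed by the verification that Craig and Pun already carried out. There is no need to return to the refined asymptotic expansion in Theorem \ref{MainTh}; the qualitative statement (Laguerre of any order, for $n$ large enough) is purely a matter of chaining two existing general results.

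Concretely, I would first recall the Wang-Yang theorem: any sequence $\{\alpha_n\}$ admitting an asymptotic expansion of the prescribed shape, with suitable control on the tail, satisfies the Laguerre inequality of every fixed order $m$ for all sufficiently large $n$. Second, I would cite the implication highlighted in the paragraph immediately preceding the corollary, namely that the hypotheses of Griffin, Ono, Rolen, and Zagier \cite{Griffin-Ono-Rolen-Zagier-2019} imply those of Wang and Yang \cite{Wang-Yang-2024}. Third, I would invoke Craig and Pun \cite{Craig-Pun-2021}, who verified that $q(n)$ meets the GORZ hypotheses (exactly the input they used to conclude higher-order Tur\'an inequalities for $q(n)$). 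Combining these three facts, $q(n)$ satisfies the WY hypotheses, hence fulfills the Laguerre inequality of every fixed order $m\ge 1$ for all $n$ sufficiently large.

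There is essentially no technical obstacle here: the heavy lifting has been done in the three cited works, and the corollary is really a bookkeeping statement. The only point requiring any care is ensuring that the formal ingredients Craig and Pun established for $q(n)$ translate verbatim into the input that Wang and Yang require; this is a routine check, explicitly acknowledged in the remark of the introduction that GORZ-admissible sequences are automatically WY-admissible, so no new estimate on $q(n)$ beyond those of \cite{Craig-Pun-2021} (or, more than sufficiently, Theorem \ref{MainTh}) is needed.
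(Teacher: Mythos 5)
Your proposal is correct and matches the paper's own argument exactly: the paper likewise obtains the corollary by chaining the Wang--Yang criterion \cite{Wang-Yang-2024}, the observation that sequences satisfying the Griffin--Ono--Rolen--Zagier conditions \cite{Griffin-Ono-Rolen-Zagier-2019} automatically satisfy the Wang--Yang conditions, and Craig and Pun's verification \cite{Craig-Pun-2021} that $q(n)$ meets the former. No further comment is needed.
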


With the aid of Theorem \ref{MainTh}, we determine the threshold
for the Laguerre inequalities of order 3 and their companion inequalities.
In fact, the Laguerre inequalities of order $2$ is equivalent to the inequality $A(a_0, a_1, a_2, a_3, a_4)\geq0$. The findings regarding the third-order Laguerre inequalities are presented as follows.

\begin{theorem}\label{3-laguerre}
For $n\geq 651$, $q(n)$ satisfies the Laguerre inequalities of order $3$, i.e.,
\begin{align*}
 10{q(n+3)}^2+6q(n+1)q(n+5)>15q(n+2)q(n+4)+q(n)q(n+6).
\end{align*}
\end{theorem}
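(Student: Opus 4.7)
The plan is to apply Theorem \ref{MainTh} with a sufficiently large index $N$ to each of $q(n), q(n+1), \ldots, q(n+6)$ and then to show that the resulting expression for the Laguerre quadratic form
\begin{equation*}
\mathcal{L}_3(n) \;:=\; 10\,q(n+3)^2 + 6\,q(n+1)\,q(n+5) - 15\,q(n+2)\,q(n+4) - q(n)\,q(n+6)
\end{equation*}
is strictly positive for $n \ge 651$, and then to finish off the remaining finite range by direct computation of $q$ via its standard recurrence, mirroring the strategy already used in the excerpt for Theorems \ref{A-thm}--\ref{c-d-turan}. Writing $q(n+s) = P(n)\bigl(S_N(n,s) + R_N(n,s)\bigr)$, where $P(n):=e^{\pi\sqrt{n/3}}/(4\cdot 3^{1/4} n^{3/4})$, $S_N(n,s):=\sum_{m=0}^{N}\widehat{B}_m(s)\,n^{-m/2}$, and $|R_N(n,s)|\le Er_N(s)\,n^{-(N+1)/2}$ for $n \ge n(N,s)$, one substitutes into $\mathcal{L}_3(n)$ and factors out the positive common quantity $P(n)^2$. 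The desired inequality $\mathcal{L}_3(n)>0$ then reduces to a scalar inequality $\mathcal{P}_N(n) + \mathcal{E}_N(n) > 0$, where $\mathcal{P}_N(n)$ is the explicit polynomial in $n^{-1/2}$ produced by the bilinear combinations of the $\widehat{B}_m(s)$'s, and $\mathcal{E}_N(n)$ packages the cross-terms $\widehat{B}_m(s)R_N(n,s')$ and $R_N(n,s)R_N(n,s')$.

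The key structural observation is that the Laguerre form of order $3$, rewritten as
\begin{equation*}
\tfrac{1}{2}\sum_{k=0}^{6}(-1)^{k+3}\binom{6}{k}\,q(n+k)\,q(n+6-k),
\end{equation*}
contributes, at order $n^{-(i+j)/2}$, the symbol $\sum_{k=0}^{6}(-1)^{k+3}\binom{6}{k}\widehat{B}_i(k)\widehat{B}_j(6-k)$. Since each $\widehat{B}_m$ is polynomial in $s$ of degree at most $m$ (arising from Taylor-expanding $e^{\pi\sqrt{(n+s)/3}}$ and $(n/(n+s))^{3/4}$ in powers of $s/n$), this symbol is, up to sign, the sixth forward difference of a polynomial in $k$ of degree at most $i+j$, and therefore vanishes whenever $i+j\le 5$. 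Consequently $\mathcal{P}_N(n)$ starts only at order $n^{-3}$, and the minimal useful choice of $N$ is $6$; a short symbolic computation should then extract the leading coefficient at $n^{-3}$ and confirm it is positive, determining whether $N=6$ suffices or whether $N=7$ or $8$ is needed to give the error a comfortable margin.

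Once $N$ is fixed, the final task is quantitative. The plan is to bound $|\mathcal{E}_N(n)|$ by the triangle inequality, insert the explicit constants from \eqref{finalerrordef} and the entrance threshold \eqref{n(N,s)}, and solve for the smallest $n_0$ beyond which the positive $n^{-3}$ contribution in $\mathcal{P}_N(n)$ dominates every error piece. The remaining values $n(N,s)\le n <\max(n_0,651)$ will be handled by computing $q(n),\ldots,q(n+6)$ directly and checking the Laguerre inequality on the nose. The main obstacle here is managerial rather than conceptual: $\mathcal{P}_N(n)$ is a signed linear combination of the $28$ bilinear products $S_N(n,k)S_N(n,6-k)$, each a length-$(N+1)$ series, and orchestrating the calculation so that both the sixth-order cancellation is manifest \emph{and} the explicit error constants stay tight enough to reach the announced threshold $n=651$ is the delicate part of the argument.
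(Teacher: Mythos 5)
Your proposal is essentially the paper's own proof: the authors likewise substitute the expansion of Theorem \ref{MainTh} into the Laguerre form via the explicit lower/upper bounds $L(n,s,N)$ and $U(n,s,N)$ of \eqref{Maintheqn1}, reduce to the scalar inequality $10L^2(n,3,N)+6L(n,1,N)L(n,5,N)>15U(n,2,N)U(n,4,N)+U(n,0,N)U(n,6,N)$, solve it with Mathematica's \texttt{Reduce}, and check the remaining finite range directly, and your degree-six cancellation argument (each $\widehat{B}_m(s)$ being polynomial of degree $\le m$ in $s$, so the sixth difference kills all orders below $n^{-3}$) is a correct explanation of why the method works at all. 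The one quantitative correction: $N\in\{6,7,8\}$ will not give ``a comfortable margin,'' because $Er_N(s)$ contains factors like $2^{N+1}\bigl(\tfrac{24s+1}{24}\bigr)^{(N+1)/2}$ and $\cosh\bigl(\pi\sqrt{\tfrac{24s+1}{72}}\bigr)$, which for $s\le 6$ already make $Er_6(6)\gtrsim 10^6$, so with $N=6$ the error term $Er_N(s)n^{-(N+1)/2}$ only falls below the $n^{-3}$ main term for astronomically large $n$; the paper instead takes $N=24$, obtains the scalar inequality for $n\ge 12884$, enters the asymptotic regime at the entrance threshold $n(24,s)\le 18502$, and verifies $651\le n\le 18501$ by computer (also note the finite range to be checked is $651\le n<\max(n_0,\,n(N,s))$, not the interval you wrote).
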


\begin{theorem}\label{c-3-laguerre}
For $n\geq 715$, $q(n)$ satisfies the companion Laguerre inequalities of order $3$, i.e.,
\begin{align*}
 10{q(n+3)}^2&+6q(n+1)q(n+5)
 \\
 &<\left(15q(n+2)q(n+4)+q(n)q(n+6)\right)
 \left(1+\frac{5\pi^3}{256\sqrt{3}n^{\frac{9}{2}}}\right).
\end{align*}
\end{theorem}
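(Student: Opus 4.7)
The proof will parallel that of Theorem \ref{3-laguerre}, but requires one more order of precision in the asymptotic expansion. My plan is to invoke Theorem \ref{MainTh} simultaneously for the seven shifts $s=0,1,\dots,6$ with a common truncation index $N$, chosen large enough that the asymptotic expansion of each product $q(n+i)q(n+j)$ is accurate well beyond the order $n^{-9/2}$ at which the companion correction $1+\frac{5\pi^3}{256\sqrt{3}\,n^{9/2}}$ acts. Tentatively $N=11$ should suffice, giving a couple of orders of cushion past the correction. I would then form
\begin{equation*}
\Delta(n):=\Bigl(15\,q(n+2)q(n+4)+q(n)q(n+6)\Bigr)\Bigl(1+\tfrac{5\pi^3}{256\sqrt{3}\,n^{9/2}}\Bigr)-10\,q(n+3)^2-6\,q(n+1)q(n+5)
\end{equation*}
and reduce the claim to showing $\Delta(n)>0$ for $n\ge 715$.

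After substituting the expansions from Theorem \ref{MainTh} and factoring out the common prefactor $C(n)^2:=\frac{e^{2\pi\sqrt{n/3}}}{48\,n^{3/2}}$, the third-order Laguerre combination produces, in each homogeneous order $n^{-k/2}$, a coefficient $R_k$ that is a finite sum of products $\widehat{B}_i(s)\widehat{B}_j(t)$ with $i+j=k$ and $(s,t)\in\{(3,3),(1,5),(2,4),(0,6)\}$ carrying the weights $(10,6,-15,-1)$. These coefficients must be computed for $k=0,1,\dots,9$ via \eqref{finalcoeffdef}. The mechanism behind Theorem \ref{3-laguerre} is that $R_0=R_1=\cdots=R_8=0$ and $R_9>0$; the constant $\frac{5\pi^3}{256\sqrt{3}}$ is then calibrated so that, combined with the leading approximation $15\,q(n+2)q(n+4)+q(n)q(n+6)\sim 16\,C(n)^2$, it overcomes $R_9$ at the same order. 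Concretely, the key algebraic fact to verify is
\begin{equation*}
R_9\ <\ 16\cdot\frac{5\pi^3}{256\sqrt{3}}\ =\ \frac{5\pi^3}{16\sqrt{3}},
\end{equation*}
so that the coefficient of $n^{-9/2}$ in $\Delta(n)/C(n)^2$ is strictly positive.

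The final step is the quantitative control of the error tails. Using \eqref{finalerrordef} with the chosen $N$ for each of the seven shifts, every product $q(n+i)q(n+j)$ inherits an explicit remainder of the form $C(n)^2\cdot K_{ij}(N)\,n^{-(N+1)/2}$, where $K_{ij}(N)$ is the usual combination of the partial sums $\sum_{m=0}^{N}|\widehat{B}_m(s)|\,n^{-m/2}$ with the corresponding $Er_N(s)$. Bundling these into a single overall constant $K(N)$ yields
\begin{equation*}
\bigl|\Delta(n)-C(n)^2\bigl(\tfrac{5\pi^3}{16\sqrt{3}}-R_9\bigr)n^{-9/2}\bigr|\ \le\ C(n)^2\,K(N)\,n^{-5}\qquad\text{for } n\ge n(N,s),
\end{equation*}
and one then solves for the smallest $n_\star$ at which the positive main term dominates. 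The main obstacle is twofold: verifying $R_9<\frac{5\pi^3}{16\sqrt{3}}$ despite the heavy cancellations in the $\widehat{B}_m(s)$ computation, and choosing $N$ so that $n_\star\le 715$; any residual range $715\le n<n_\star$ would be dispatched by direct numerical evaluation of $q(n)$.
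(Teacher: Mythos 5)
Your strategy is essentially the paper's: apply Theorem \ref{MainTh} to the seven shifts $s=0,\dots,6$, reduce the claim to a polynomial inequality in $n^{1/2}$ between the resulting lower bounds $L(n,s,N)$ and upper bounds $U(n,s,N)$, verify that inequality symbolically for $n$ beyond some threshold, and check the residual range numerically; your explicit identification of the mechanism ($R_0=\dots=R_8=0$ and $R_9<\tfrac{5\pi^3}{16\sqrt{3}}$, since the weights sum to zero and $15+1=16$) is exactly what the paper's \texttt{Reduce} call verifies implicitly. The one substantive problem is your tentative choice $N=11$: a cushion of ``a couple of orders'' past $n^{-9/2}$ is nowhere near enough, because the explicit error constants are enormous. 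For $s=6$ and $N=11$ one has $|a_{11}(1)|\approx 6\times 10^{2}$, $(\tfrac{24s+1}{24})^{(N+1)/2}\approx 5\times 10^{4}$ and $A(6)\approx 80$, so $Er_{11}(6)$ is of order $10^{12}$; balancing $Er_{11}(6)\,n^{-6}$ against the signal $\approx 2.8\,n^{-9/2}$ forces $n\gtrsim 10^{9}$, which makes the ``residual range'' check infeasible. This is why the paper takes $N=24$ (so the error sits at relative order $n^{-25/2}$, eight powers of $\sqrt n$ below the signal), accepts the resulting constraint $n(24,s)\le 18502$, obtains the threshold $n\ge 17880$ from \texttt{Reduce}, and then verifies $715\le n\le 18501$ directly. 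Your plan closes once $N$ is increased accordingly; also note the harmless slip that the squared prefactor is $\frac{e^{2\pi\sqrt{n/3}}}{16\sqrt{3}\,n^{3/2}}$, not $\frac{e^{2\pi\sqrt{n/3}}}{48\,n^{3/2}}$.
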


The paper is organized as follows. In Section \ref{sec-set}, we establish the upper and lower bounds for $q(n)$, which in turn furnish an asymptotic expansion of $q(n+s)$. In Section \ref{sec-pre}, we calculate each term of the expansion separately by Taylor's theorem. These enable us to prove Theorem \ref{MainTh} in Section \ref{sec-asym-thm}. Finally, in Section \ref{sec-ineq}, we prove all the inequalities by applying Theorem \ref{MainTh}.

	\section{Set up}\label{sec-set}

In this section, by giving an upper bound and a lower bound for $q(n)$, and along with the estimations of the first modified Bessel function of the first kind,we shall obtain a preliminary asymptotic expansion of $q(n+s)$.

	Define \begin{equation}\label{N0(m)}
N_0(m):=
\begin{cases}
1, &\quad \text{if}\ m=1,\\
4m \log m -3m\log \log m, & \quad \text{if}\ m\ge2,
\end{cases}
\end{equation}
and \begin{equation}\label{M(n)}
M(n):=\frac{\sqrt{2}\pi^2}{12 \nu(n)}I_1(\nu(n)),
\end{equation}
where $I_1(s)$ is the first modified Bessel function of the first kind and
$$\nu(n):=\frac{\pi\sqrt{24n+1}}{6\sqrt{2}}.$$

\begin{lemma}\label{q-asym-lem}
	For $\nu(n)\ge \max\{26, N_0(m+1)\}$, we have
	\begin{equation}\label{q(n)first}
	M(n)\Biggl(1-\frac{4}{\nu(n)^m}\Biggr)\le q(n)\le M(n)\Biggl(1+\frac{4}{\nu(n)^m}\Biggr).
	\end{equation}
\end{lemma}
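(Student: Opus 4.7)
The plan is to derive the two-sided bound by truncating the exact Hardy--Ramanujan--Rademacher type formula for $q(n)$ after its first term. Following Dong--Ji's adaptation of Chern's asymptotic formulas for $\eta$-quotients, $q(n)$ admits an absolutely convergent series representation of the shape
\[
q(n)=\frac{\pi}{\sqrt{24n+1}}\sum_{\substack{k\ge 1\\\gcd(k,2)=1}}\frac{K_k(n)}{\sqrt{k}}\,I_1\!\left(\frac{\nu(n)}{k}\right),
\]
in which the $k=1$ contribution is precisely $M(n)$ (because $K_1(n)=\sqrt{2}\pi/(12)$ up to normalization conventions that absorb into the definition \eqref{M(n)}), and $K_k(n)$ is a Kloosterman-type sum satisfying an explicit trivial bound $|K_k(n)|\le c_0\, k$ for some absolute $c_0$. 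The goal is then to estimate the tail $q(n)-M(n)$.

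First, I would isolate the main term and reduce the statement to a bound of the form
\[
|q(n)-M(n)|\;\le\;\frac{4\,M(n)}{\nu(n)^m}.
\]
To handle the tail, I would apply the two-sided estimate for the modified Bessel function,
\[
\frac{e^{x}}{\sqrt{2\pi x}}\Bigl(1-\tfrac{c_1}{x}\Bigr)\;\le\;I_1(x)\;\le\;\frac{e^{x}}{\sqrt{2\pi x}}\Bigl(1+\tfrac{c_2}{x}\Bigr),
\]
valid for $x$ above some explicit threshold (this is where the cutoff $\nu(n)\ge 26$ enters: it makes every argument $\nu(n)/k$ for $k=1,3$ sufficiently large, and for $k\ge 5$ the contribution is negligible). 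Using the upper estimate for $k\ge 3$ and the lower estimate for the $k=1$ main term, termwise the $k$-th tail contribution is at most a constant multiple of $\sqrt{k}\,e^{\nu(n)/k}$, which is a geometric-like sequence dominated by its $k=3$ element.

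Second, forming the ratio with $M(n)$ yields
\[
\frac{|q(n)-M(n)|}{M(n)}\;\le\;C\,\nu(n)^{\alpha}\,e^{-2\nu(n)/3}
\]
for some explicit $C,\alpha$. The critical step is to show that the hypothesis $\nu(n)\ge N_0(m+1)$ with $N_0(m+1)=4(m+1)\log(m+1)-3(m+1)\log\log(m+1)$ forces this ratio to be at most $4/\nu(n)^m$. Taking logarithms, this amounts to the inequality
\[
\tfrac{2}{3}\,\nu(n)\;\ge\;(m+\alpha)\log\nu(n)+\log(C/4).
\]
For $\nu=N_0(m+1)$ one has $\log\nu\approx \log(m+1)+\log\log(m+1)$, and a direct monotonicity check shows the inequality holds for all $m\ge 1$; the separate $m=1$ clause in the definition \eqref{N0(m)} is used to handle the boundary case where the asymptotic $4m\log m$ is too small.

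The main obstacle is the book-keeping: one has to be careful about the explicit constants in the Bessel bounds, the explicit bound on the Kloosterman sums $K_k(n)$, and the geometric tail sum for $k\ge 5$, so that the resulting ratio constant works out to exactly the clean value $4$ claimed in \eqref{q(n)first}, rather than some unwieldy absolute constant. The two simultaneous thresholds $\nu(n)\ge 26$ (uniform threshold making the Bessel asymptotics kick in) and $\nu(n)\ge N_0(m+1)$ ($m$-dependent threshold controlling $\nu^m e^{-2\nu/3}$) are then exactly what is needed to close the argument.
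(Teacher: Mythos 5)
Your proposal is correct and follows essentially the same route as the paper: isolate the main term $M(n)$, bound the relative error by a quantity of the form $C\,\nu(n)^{\alpha}e^{-2\nu(n)/3}$, and then use the hypothesis $\nu(n)\ge N_0(m+1)$ to force $e^{-2\nu(n)/3}<\nu(n)^{-(m+1)}$, which absorbs the polynomial prefactor and yields the clean bound $4/\nu(n)^m$. The only real difference is that the paper avoids your Rademacher-series/Kloosterman-sum bookkeeping entirely by quoting Dong--Ji's Theorem 1.2, which already gives $|q(n)-M(n)|\le\frac{\sqrt{3}\pi^{3/2}}{6\nu(n)^{1/2}}e^{\nu(n)/3}$ for $\nu(n)\ge 21$, together with their Eq.\ (3.16) bounding the resulting Bessel ratio by $2\sqrt{3}\,\nu(n)(1+1/\nu(n))e^{-2\nu(n)/3}$ for $\nu(n)\ge 26$, so the exact constant $4$ falls out of the elementary estimate $2\sqrt{3}\,\nu(1+1/\nu)\le 4\nu$ for $\nu\ge 7$ rather than from a tail-sum computation.
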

\begin{proof}
	From \cite[Theorem 1.2]{Dong-Ji-2024}, we know that for $\nu(n)\geq 21$, \begin{equation*}
	q(n)=\frac{\sqrt{2}\pi^2}{12 \nu(n)}I_1(\nu(n))+R(n),
	\end{equation*}
	where
	\begin{equation*}
	|R(n)|\leq \frac{\sqrt{3}\pi^{\frac{3}{2}}}{6\nu^{\frac{1}{2}}}
	\exp\left(\frac{\nu(n)}{3}\right).
	\end{equation*}
	As a result, let
	\begin{equation*}
	G(n):=\sqrt{\frac{6\nu(n)}{\pi}}\cdot \frac{\exp\left(\frac{\nu(n)}{3}\right)}{I_1(\nu(n))},
	\end{equation*}
	we have
	\begin{equation*}
	M(n)\left(1-G(n)\right)\leq q(n)\leq  M(n)\left(1+G(n)\right).
	\end{equation*}
	Hence, to verify \eqref{q(n)first}, it suffices to show that for $\nu(n)\geq \max\{26,N_0(m+1)\}$,
	\begin{equation}\label{b-G}
	G(n)\leq \frac{4}{\nu(n)^{m}}.
	\end{equation}
	Dong and Ji \cite[Eq.(3.16)]{Dong-Ji-2024} gave that for $\nu(n)\geq 26$,
	\begin{equation}\label{G}
	G(n)\leq 2\sqrt{3}\nu(n)\left(1+\frac{1}{\nu(n)}\right)
	\exp\left(-\frac{2\nu(n)}{3}\right).
	\end{equation}
	It's easy to check that for $\nu(n)\geq 7$,
	\begin{equation}\label{G-1}
	2\sqrt{3}\nu(n)\left(1+\frac{1}{\nu(n)}\right)\leq 4\nu(n).
	\end{equation}
	Analogous to the proof of \cite[Lemma 2.1]{Mukherjee-Zhang-Zhong-2023}, for $x\geq N_0(m)$, we have
	\begin{equation*}
	\exp\left(-\frac{2x}{3}\right)<x^{-m}.
	\end{equation*}
	It follows that for $\nu(n)\geq N_0(m+1)$,
	\begin{equation}\label{G-2}
	\exp\left(-\frac{2\nu(n)}{3}\right)<\frac{1}{\nu(n)^{m+1}}.
	\end{equation}
	Plugging \eqref{G-1} and \eqref{G-2} into \eqref{G}, we get \eqref{b-G} and thus complete the proof.
\end{proof}

In view of \cite[Theorem 3.9]{B}, for $N\geq 1$, we have
\begin{equation*}
\left|\frac{\sqrt{2\pi x}}{e^x}I_1(x)-\sum_{m=0}^N \frac{(-1)^m a_m(1)}{x^m}\right|<\frac{Er_{N,1}}{x^{N+1}},
\end{equation*}
where
\begin{align}
\label{a}
a_m(v)&:=\frac{{v-\frac{1}{2}\choose m}\left(v+\frac{1}{2}\right)_{m}}{2^{m}},
\\\label{eqn1}
Er_{N,1}&:=\frac{3^{\frac{N+1}{2}}}{\pi^{N+1}}\left(\frac{1+\frac{9}{\log(N+1)}+\frac{9}{N+2}}{\sqrt{2\pi}}
+\frac{\sqrt{2}+\left(N+\frac{5}{2}\right)
	^{-\frac{1}{2}}}{\log(N+1)}\right)|a_{N+1}(1)|.
\end{align}
Combining with Lemma \ref{q-asym-lem}, for $(s, N) \in \mathbb{N}_0\times\mathbb{N}$ and $\nu(n)\ge \max\{26,\ N_0(N+2)\}$, we have
\begingroup
\allowdisplaybreaks
\begin{eqnarray}\label{q(n)second}	q(n+s)&=&M(n+s)\left(1+O_{\le4}\left(\frac{3^{\frac{N+1}{2}}}{\pi^{N+1}}
n^{-\frac{N+1}{2}}\right)\right) \nonumber \\ &=& \frac{e^{\pi\sqrt{\frac{n}{3}}}}{4\cdot3^{1/4}n^{3/4}}
\frac{e^{\pi\sqrt{\frac{n}{3}}\left(\sqrt{1+\frac{24s+1}{24n}}-1\right)}}
{\Bigl(1+\frac{24s+1}{24n}\Bigr)^{3/4}}\Biggl(\sum_{m=0}^{N}\frac{(-1)^m a_m(1)}{(\frac{\pi}{\sqrt{3}})^m n^{\frac{m}{2}}}\left(1+\frac{24s+1}{24n}\right)^{-\frac{m}{2}} \nonumber \\ & & \hspace{2.5 cm}+O_{Er_{N,1}}\left(
n^{-\frac{N+1}{2}}\right)\Biggr)\left(1+O_{\le4}\left(\frac{3^{\frac{N+1}{2}}}{\pi^{N+1}}
n^{-\frac{N+1}{2}}\right)\right).
\end{eqnarray}
\endgroup

\section{Preliminary and preparatory lemmas}\label{sec-pre}

In this section, through the application of Taylor's theorem, we separately compute each term of the preliminary asymptotic expansion of
$q(n+s)$ given in \eqref{q(n)second}, and these estimations are indispensable for the proof of Theorem \ref{MainTh}. To this end, we need the following result.

\begin{lemma}\cite[Lemma 3.3]{BPRS}\label{lemma2.3}
	For $r,m\in {\mathbb{N}}_0$ with $r<2m$, we have
	\begin{equation*}
	\sum_{s=0}^{r}(-1)^s\binom{r}{s}\binom{\frac{s}{2}}{m}=
	\begin{cases}
	1, &\quad \text{if}\ r=m=0,\\
	(-1)^m \frac{r \cdot 2^r}{m \cdot 2^{2m}} \binom{2m-r-1}{m-r}, & \quad \text{otherwise}.
	\end{cases}
	\end{equation*}
\end{lemma}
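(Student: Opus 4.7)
The plan is to recognize the left-hand side as a coefficient extraction from a generating function and then apply the Lagrange inversion formula. The key observation is that $\binom{s/2}{m}$ is the coefficient of $x^m$ in the formal power series $(1+x)^{s/2}$. Swapping the order of summation with the binomial theorem gives
\begin{equation*}
\sum_{s=0}^r(-1)^s\binom{r}{s}\binom{s/2}{m}
= [x^m]\sum_{s=0}^r (-1)^s\binom{r}{s}(1+x)^{s/2}
= [x^m]\bigl(1-\sqrt{1+x}\bigr)^r,
\end{equation*}
so the lemma reduces to computing a single coefficient in an explicit algebraic power series.

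Next I would evaluate $[x^m]\bigl(1-\sqrt{1+x}\bigr)^r$ via Lagrange inversion. Setting $y=1-\sqrt{1+x}$ gives $x=y(y-2)$, which I rewrite in the standard form $y=x\,\phi(y)$ with $\phi(y)=1/(y-2)$, noting that $\phi(0)=-1/2\ne 0$. Lagrange's formula then yields
\begin{equation*}
[x^m]y^r=\frac{r}{m}\,[y^{m-r}]\phi(y)^m
=\frac{(-1)^m\,r}{m\cdot 2^m}\,[y^{m-r}](1-y/2)^{-m},
\end{equation*}
and the ordinary binomial series for $(1-y/2)^{-m}$ extracts $[y^{m-r}](1-y/2)^{-m}=2^{r-m}\binom{2m-r-1}{m-r}$. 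Putting these pieces together produces precisely the closed form stated on the right-hand side of the lemma.

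The boundary cases are easy to dispose of: when $m<r<2m$, the binomial $\binom{2m-r-1}{m-r}$ vanishes under the standard convention (since $m-r<0$), consistent with $[y^{m-r}]$ of a formal power series being zero; and when $r=m=0$ Lagrange inversion cannot be applied directly (the factor $1/m$ is undefined), but then the original sum collapses to the single term $\binom{0}{0}\binom{0}{0}=1$, matching the stated value. The main obstacle is really a bookkeeping one: invoking Lagrange inversion cleanly in the formal power series setting, and verifying that the resulting formula is valid across the full range $r<2m$ once the degenerate $r=m=0$ case is treated by direct inspection.
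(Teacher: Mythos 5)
Your argument is correct and complete. Note that the paper under review does not actually prove this lemma---it imports it verbatim from \cite[Lemma 3.3]{BPRS}---so there is no in-text proof to compare against; your derivation therefore stands as a self-contained justification. The key reduction $\sum_{s=0}^{r}(-1)^s\binom{r}{s}\binom{s/2}{m}=[x^m]\bigl(1-\sqrt{1+x}\bigr)^r$ is valid as a finite sum of formal binomial series, and the Lagrange inversion step is set up properly: $y=1-\sqrt{1+x}$ satisfies $y=x\phi(y)$ with $\phi(y)=1/(y-2)$, $\phi(0)\neq 0$, and $[x^m]y^r=\tfrac{r}{m}[y^{m-r}]\phi(y)^m$ then yields exactly $(-1)^m\tfrac{r\cdot 2^r}{m\cdot 2^{2m}}\binom{2m-r-1}{m-r}$ after expanding $(1-y/2)^{-m}$; I checked the constants against small cases such as $(r,m)=(1,1),(2,2),(1,2)$ and they match. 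Your handling of the edge cases is also sound: for $m<r<2m$ both sides vanish (the series $\bigl(1-\sqrt{1+x}\bigr)^r$ starts at order $x^r$), the hypothesis $r<2m$ guarantees $2m-r-1\ge 0$ so the binomial on the right is unambiguous, and $r=m=0$ is settled by direct inspection. The only stylistic remark is that one could avoid Lagrange inversion entirely by an elementary induction or a Vandermonde-type manipulation, but your route is cleaner and fully rigorous in the formal power series setting.
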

Following the work done in \cite{M}, we have the following preparatoty Lemmas \ref{exp} - \ref{besselER}. Before that,
for $k\in \mathbb{N}_0$, define
\begin{equation*}
B_{2k}(s):=
\begin{cases}
1, &\quad \text{if}\ k=0,\\
\frac{(\frac{24s+1}{24})^k\left(\frac{1}{2}-k\right)_{k+1}}{k} \displaystyle\sum_{\ell_1=1}^{k}\frac{ (-k)_{\ell_1}}{(k+\ell_1)!}\frac{\Bigl(\pi \sqrt{\frac{24s+1}{72}}\Bigr)^{2\ell_1}}{(2\ell_1-1)!}, &\quad \text{if}\ k\ge 1,
\end{cases}
\end{equation*}
and
\begin{equation}\label{expcoeffdef2}
B_{2k+1}(s):=\frac{\pi}{\sqrt{3}}\left(\frac{24s+1}{24}\right)^{k+1}\left(\frac{1}{2}-k \right)_{k+1} \sum_{\ell_1=0}^{k}\frac{(-k)_{\ell_1}}{(\ell_1+k+1)!}\frac{\Bigl(\pi \sqrt{\frac{24s+1}{72}}\Bigr)^{2\ell_1}}{(2\ell_1)!}.
\end{equation}
Moreover, we let
\begin{equation}\label{eqn2}
Er_2(N,s):=\frac 43  \sqrt{\frac{2\pi}{3}}N^{-\frac{3}{2}}\Bigl(\frac{24s+1}{24}\Bigr)^{\frac{N+2}{2}} \cosh\Bigl(\pi \sqrt{\frac{24s+1}{72}}\Bigr).
\end{equation}

\begin{lemma}\label{exp}
	For $N \geq 1$ and $n \geq \lceil \frac{2(24s+1)}{3} \rceil$, we have
	\begin{equation*}
	e^{\pi\sqrt{\frac{n}{3}}\Bigl(\sqrt{1+\frac{24s+1}{24n}}-1\Bigr)}= \sum_{k=0}^{N}\frac{B_k(s)}{n^{\frac{k}{2}}} + O_{\leq Er_2(N,s)}\left(n^{-\frac{N+1}{2}}\right),
	\end{equation*}
\end{lemma}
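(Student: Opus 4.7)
The plan is to translate the stated asymptotic expansion into an identity in the single variable $t:=n^{-1/2}$, extract the coefficient of $t^{K}$ in closed form using Lemma \ref{lemma2.3}, and then bound the tail. Setting $\alpha:=\frac{24s+1}{24}$, the hypothesis $n\ge \lceil 2(24s+1)/3\rceil$ gives $\alpha t^{2}=\alpha/n\le 1/16$, so the binomial series for $\sqrt{1+\alpha t^{2}}$ converges absolutely. First I would rewrite the exponent as
$$\varphi(t):=\pi\sqrt{n/3}\Bigl(\sqrt{1+\alpha/n}-1\Bigr)=\frac{\pi}{\sqrt{3}\,t}\bigl(\sqrt{1+\alpha t^{2}}-1\bigr)=\frac{\pi}{\sqrt{3}}\sum_{j\ge 1}\binom{1/2}{j}\alpha^{j}\,t^{2j-1},$$
which is $O(t)$ and involves only odd powers of $t$. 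Exponentiating gives
$$e^{\varphi(t)}=\sum_{r\ge 0}\frac{1}{r!}\Bigl(\frac{\pi}{\sqrt{3}\,t}\Bigr)^{r}\bigl(\sqrt{1+\alpha t^{2}}-1\bigr)^{r},$$
and the coefficient of $t^{K}$ receives contributions only from $r\le K$ with $r\equiv K\pmod{2}$.

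For each such $(K,r)$, I would expand $\bigl(\sqrt{1+\alpha z}-1\bigr)^{r}=\sum_{i=0}^{r}\binom{r}{i}(-1)^{r-i}(1+\alpha z)^{i/2}$ by the ordinary binomial theorem, extract the coefficient of $z^{(K+r)/2}$, and recognize the resulting alternating sum $\sum_{i=0}^{r}(-1)^{r-i}\binom{r}{i}\binom{i/2}{(K+r)/2}$ as exactly the type handled by Lemma \ref{lemma2.3} (with outer index $r$ and $m=(K+r)/2$); this collapses it to a single binomial $\binom{K-1}{(K-r)/2}$ with an explicit sign. Splitting the remaining sum over $r$ according to the parity of $K$, with the substitutions $r=2\ell_{1}$ when $K=2k$ and $r=2\ell_{1}+1$ when $K=2k+1$, and rewriting the rational factors in terms of the Pochhammer symbols $(1/2-k)_{k+1}$ and $(-k)_{\ell_{1}}$, one obtains exactly the claimed closed forms for $B_{2k}(s)$ and $B_{2k+1}(s)$.

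For the error, I would decompose $\varphi(t)$ into a polynomial part $P_{N}(t)$ of degree at most $N$ and a tail $\mathcal{T}_{N}(t)$, write $e^{\varphi(t)}=e^{P_{N}(t)}\,e^{\mathcal{T}_{N}(t)}$, and expand $e^{P_{N}(t)}$ as a polynomial of degree $N$ in $t$ via a finite Taylor expansion of the outer exponential. The residual contribution, together with the standard bound $|e^{\mathcal{T}_{N}(t)}-1|\le|\mathcal{T}_{N}(t)|\,e^{|\mathcal{T}_{N}(t)|}$, yields an error of order $t^{N+1}=n^{-(N+1)/2}$; the explicit constant $Er_{2}(N,s)$ then incorporates $\cosh\bigl(\pi\sqrt{(24s+1)/72}\bigr)$ as the envelope of the series $\sum_{\ell}(\pi\sqrt{\alpha/3})^{2\ell}/(2\ell)!$ arising from the inner $\ell_{1}$-sums in the $B_{k}(s)$, while a Stirling-type estimate for $(1/2-k)_{k+1}$ (which behaves like $k^{-3/2}$ after accounting for factorial growth, in parallel with the decay $|\binom{1/2}{j}|\sim j^{-3/2}$) produces the $N^{-3/2}$ factor. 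The main obstacle is the combinatorial bookkeeping in the middle step: ensuring that Lemma \ref{lemma2.3} applies uniformly across both parities and produces the stated Pochhammer forms without any leftover nested sums; once those are verified, the analytic tail estimate reduces to a direct calculation matching constants against $Er_{2}(N,s)$.
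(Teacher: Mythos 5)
Your proposal is correct and follows essentially the same route as the paper: Taylor-expand the outer exponential, expand $\bigl(\sqrt{1+\alpha t^{2}}-1\bigr)^{r}$ binomially, reorganize by powers of $t=n^{-1/2}$ with the parity constraint $r\equiv K\pmod 2$, collapse the inner alternating sum via Lemma \ref{lemma2.3}, and then bound the tail coefficients $|B_k(s)|$ by a $\cosh$ envelope times a $k^{-3/2}$ factor from the central-binomial/Pochhammer estimate, summing a geometric series (ratio at most $1/4$ since $\alpha/n\le 1/16$) to get $Er_2(N,s)$. The multiplicative splitting $e^{\varphi}=e^{P_N}e^{\mathcal{T}_N}$ you sketch for the error is an unnecessary detour — the paper (and, in effect, your own description of where the constants come from) obtains the error directly as $\sum_{k\ge N+1}|B_k(s)|\,n^{-k/2}$.
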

\begin{proof}
By Taylor expansion, we have
\begingroup
\allowdisplaybreaks
\begin{eqnarray}\label{expeqnmain}
e^{\pi\sqrt{\frac{n}{3}}\Bigl(\sqrt{1+\frac{24s+1}{24n}}-1\Bigr)}&=&\sum_{r_1=0}^{\infty}\frac{\Bigl(\frac{\pi\sqrt{n}}{3}\Bigr)^{r_1}}{{r_1}!}\Biggl(\sqrt{1+\frac{24s+1}{24n}}-1\Biggr)^{r_1}
\nonumber \\ &=& \sum_{r_1=0}^{\infty}\frac{\Bigl(\frac{\pi\sqrt{n}}{3}\Bigr)^{r_1}}{{r_1}!} \sum_{s_1=0}^{r_1}(-1)^{r_1+s_1}\binom{r_1}{s_1}\sum_{m_1=0}^{\infty}\binom{\frac{s_1}{2}}{m_1}\Biggl(\frac{24s+1}{24n}\Biggr)^{m_1}\nonumber \\ &=&\sum_{m_1=0}^{\infty}\sum_{r_1=0}^{2m_1}\sum_{s_1=0}^{r_1}\frac{(\frac{\pi}{\sqrt{3}})^{r_1} \Bigl(\frac{24s+1}{24}\Bigr)^{m_1}}{{r_1}!}(-1)^{r_1+s_1} \binom{r_1}{s_1} \binom{\frac{s_1}{2}}{m_1}n^{-\frac{2m_1-r_1}{2}}. \hspace*{0.5cm}
\end{eqnarray}
\endgroup
Since $e^{\frac{\pi}{z\sqrt{3}}\Bigl(\sqrt{1+\frac{24s+1}{24}z^2}-1\Bigr)}$ with $z:=\frac{1}{\sqrt{n}}$ is analytic in the neighbourhood of 0 and thus the Taylor expansion of $e^{\pi\sqrt{\frac{n}{3}}\Bigl(\sqrt{1+\frac{24s+1}{24n}}-1\Bigr)}$ is of the form $\sum_{\ell=0}^{\infty}\frac{a_{\ell}}{\sqrt{n}^{\ell}}$, so the range $0\le r_1 \le \infty$ in the last step (\ref{expeqnmain}) is truncated to $0 \le r_1 \le 2m_1$. \\
Define
$$D':=\{(r_1,s_1,m_1)\in {\mathbb{N}}_0 : 0 \leq s_1 \leq r_1 \}$$
and for $k \in {\mathbb{N}}_0$,
$$D'_k:=\{(r_1,s_1,m_1)\in {\mathbb{N}}_0 : 2m_1-r_1=k \}.$$
For $t_1=(r_1,s_1,m_1)\in D'$, set
$$a'_{t_1}:=\frac{(\frac{\pi}{\sqrt{3}})^{r_1} \left(\frac{24s+1}{24}\right)^{m_1}}{{r_1}!}(-1)^{r_1+s_1} \binom{r_1}{s_1} \binom{\frac{s_1}{2}}{m_1},\ \text{and}\ d_{t_1}:=2m_1-r_1.$$
Therefore \eqref{expeqnmain}) can be rewritten as
\begin{equation}\label{expeqnsum}
e^{\pi\sqrt{\frac{n}{3}}\Bigl(\sqrt{1+\frac{24s+1}{24n}}-1\Bigr)}=\sum_{t_1\in(r_1,s_1,m_1)\in D'}^{}\frac{a'_{t_1}}{n^{\frac{d_{t_1}}{2}}}=\sum_{k\geq 0}\sum_{t_1\in D'_k}\frac{a'_{t_1}}{n^{\frac{k}{2}}}=\sum_{k\geq 0}\sum_{t_1\in D'_{2k}}\frac{a'_{t_1}}{n^{k}}+\sum_{k\geq 0}\sum_{t_1\in D'_{2k+1}}\frac{a'_{t_1}}{n^{k+\frac{1}{2}}}.
\end{equation}
We see that
\begingroup
\allowdisplaybreaks
\begin{align*}
D'_{2k}&=\left\{(r_1,s_1,m_1)\in D' : r_1-2m_1=-2k\right\}\\
&=\left\{(r_1,s_1,m_1)\in D': r_1\equiv 0 \ (\text{mod}\hspace*{0.2cm}2), \ r_1-2m_1=-2k\right\}\\
&=\{(2\ell_1,s_1,\ell_1+k)\in \mathbb{N}_0^3: 0\leq s_1 \leq 2\ell_1\}.
\end{align*}
\endgroup
By Lemma \ref{lemma2.3}, it follows that,
\begingroup
\allowdisplaybreaks
\begin{equation}\label{expeqnsubsum1}
\sum_{k\geq0}^{\infty}\sum_{t\in D'_{2k}}^{}\frac{a'_{t_1}}{n^{k}}=1+\sum_{k \geq 1}^{\infty} \left(\frac{(\frac{24s+1}{24})^k\left(\frac{1}{2}-k\right)_{k+1}}{k} \displaystyle\sum_{\ell_1=1}^{k}\frac{ (-k)_{\ell_1}}{(k+\ell_1)!}\frac{\Bigl(\pi \sqrt{\frac{24s+1}{72}}\Bigr)^{2\ell_1}}{(2\ell_1-1)!}\right)\frac{1}{n^k}= \sum_{k \geq 0}\frac{B_{2k}(s)}{n^k}.	
\end{equation}
\endgroup
Similarly, noting that
$$D'_{2k+1}=\left\{(2\ell_1+1,s_1,\ell_1+k+1) \in {{\mathbb{N}}_0}^3: 0 \leq s_1 \leq 2\ell_1+1\right\}$$
and applying Lemma \ref{lemma2.3} followed by \eqref{expcoeffdef2}, we obtain
\begin{eqnarray}\label{expeqnsubsum2}
&&\hspace{-1 cm}\sum_{k\geq 0}^{\infty}\sum_{t\in D'_{2k+1}}\frac{a'_{t_1}}{n^{k+\frac{1}{2}}}\nonumber\\
\hspace{2 cm}&=&\frac{\pi}{\sqrt{3}}\sum_{k\geq 0}^{\infty} \left(\Bigl(\frac{24s+1}{24}\Bigr)^{k+1}\left(\frac{1}{2}-k\right)_{k+1} \displaystyle\sum_{\ell_1=0}^{k}\frac{ (-k)_{\ell_1}}{(k+\ell_1+1)!}\frac{\Bigl(\pi \sqrt{\frac{24s+1}{72}}\Bigr)^{2\ell_1}}{(2\ell_1)!}\right)\frac{1}{n^{k+\frac 12}} \nonumber \\ &=& \sum_{k \geq 0}\frac{B_{2k+1}(s)}{n^{k+\frac{1}{2}}}.
\end{eqnarray}
Applying \eqref{expeqnsubsum1} and \eqref{expeqnsubsum2} to \eqref{expeqnsum}, we get
\begin{equation*}\label{expeqnseries}
e^{\pi\sqrt{\frac{n}{3}}\Bigl(\sqrt{1+\frac{24s+1}{24n}}-1\Bigr)}= \sum_{k=0}^{N}\frac{B_k(s)}{n^{\frac{k}{2}}}+\sum_{k \geq N+1}^{\infty}\frac{B_k(s)}{n^{\frac{k}{2}}}.
\end{equation*}	
	To estimate $|B_k(s)|$ for $k\in \mathbb{N}$, we estimate $B_{2k}(s)$ and $B_{2k+1}(s)$ separately as follows.
\begingroup
\allowdisplaybreaks
\begin{align}\nonumber
&\left|B_{2k}(s)\right|\le \frac{\left|\Bigl(\frac{24s+1}{24}\Bigr)^k\left(\frac{1}{2}-k\right)_{k+1}\right|}{k}\sum_{\ell_1=1}^{k}\frac{\Bigl(\pi \sqrt{\frac{24s+1}{72}}\Bigr)^{2\ell_1} \left|(-k)_{\ell_1}\right|}{(2\ell_1-1)! (k+\ell_1)!}\\ \nonumber
&\le \frac{\Bigl(\frac{24s+1}{24}\Bigr)^k \binom{2k}{k}}{2k\cdot 4^k}\sum_{\ell_1=1}^{k}\left|\left((-1)^{\ell_1}\prod_{j=0}^{\ell_1-1}\frac{k-j}{k+j-1}\right)\right|\frac{\Bigl(\pi \sqrt{\frac{24s+1}{72}}\Bigr)^{2\ell_1}}{(2\ell_1-1)!} \\\label{expeqncoeff1}
&\le \sqrt{\frac{\pi}{3}}\frac{\Bigl(\frac{24s+1}{24}\Bigr)^{k+\frac 12 } }{2k^{\frac 32}}\sum_{\ell_1=1}^{k}\frac{\Bigl(\pi \sqrt{\frac{24s+1}{72}}\Bigr)^{2\ell_1-1}}{(2\ell_1-1)!}\le \sqrt{\frac{\pi}{3}}\frac{\Bigl(\frac{24s+1}{24}\Bigr)^{k+\frac 12 } }{2k^{\frac 32}}\sinh\left(\pi \sqrt{\frac{24s+1}{72}}\right).
\end{align}
\endgroup
Analogous to \eqref{expeqncoeff1}, for $k\in \mathbb{N}$, we get
\begin{equation}\label{expeqncoeff2}
\hspace{-0.5 cm}\left|B_{2k+1}(s)\right|
\le \sqrt{\frac{\pi}{3}}\frac{\Bigl(\frac{24s+1}{24}\Bigr)^{k+1} }{2k^{\frac 32}}\cosh\Bigl(\pi \sqrt{\frac{24s+1}{72}}\Bigr).
\end{equation}
Combining \eqref{expeqncoeff1} and \eqref{expeqncoeff2}, it follows that for $k\geq 2$,
\begin{equation}\label{Bkfinalbound}
\left|B_k(s)\right| \le \sqrt{\frac{\pi}{3}}\frac{ \Bigl(\frac{24s+1}{24}\Bigr)^{\frac{k+1}{2}} }{2\left(\frac {k-1}2\right)^{\frac{3}{2}}} \cosh\Bigl(\pi \sqrt{\frac{24s+1}{72}}\Bigr),
\end{equation}
and therefore, applying \eqref{Bkfinalbound}, we get for all $N\ge 1$ and $n\ge \lceil \frac{2(24s+1)}{3} \rceil$,
\begingroup
\allowdisplaybreaks
\begin{align}\nonumber
\left|\sum_{k \geq N+1}\frac{B_k(s)}{n^{\frac{k}{2}}}\right|
&
\le \sqrt{\frac{2\pi}{3}}\cosh\Bigl(\pi \sqrt{\frac{24s+1}{72}}\Bigr)\sum_{k \geq N+1}\frac{\left(\frac{24s+1}{24}\right)^{\frac {k+1}2}}{(k-1)^{\frac{3}{2}}n^{\frac k2}}
\\\nonumber
&\le N^{-\frac{3}{2}}\sqrt{\frac{2\pi}{3}}\cdot \sqrt{\frac{24s+1}{24}} \cosh\Bigl(\pi \sqrt{\frac{24s+1}{72}}\Bigr)\sum_{k \geq N+1} \frac{(24s+1)^{\frac{k}{2}}}{ {(24n)}^{\frac{k}{2}}}
\\\nonumber
&\le \frac 43 N^{-\frac{3}{2}}
\sqrt{\frac{2\pi}{3}}\Bigl(\frac{24s+1}{24}\Bigr)^{\frac{N+2}{2}} \cosh\Bigl(\pi \sqrt{\frac{24s+1}{72}}\Bigr)n^{-\frac{N+1}{2}}=Er_2(N,s)n^{-\frac{N+1}{2}}.\nonumber
\end{align}
\endgroup
Thus, we conclude the proof.
\end{proof}

Define
\begin{equation}\label{binomeqncoeffdef}
\overline{A}_\ell\Bigl(\frac{3}{4},s\Bigr):=\begin{cases}
(\frac{24s+1}{24})^{\frac{\ell}{2}}\binom{-3/4}{\frac{\ell}{2}}, &\quad \text{if}\  \ell \equiv 0 \left(\text{mod}\ 2\right),\\
0, &\quad\ \text{otherwise},
\end{cases}
\end{equation}
and
\begin{equation}\label{eqn3}
Er_3\Bigl(\frac{3}{4},N,s\Bigr):=\frac 43 \Bigl(\frac{24s+1}{24}\Bigr)^{\frac{N+1}{2}}.
\end{equation}

\begin{lemma}\label{binom}
	For $N \geq 1$ and $n \geq \left\lceil \frac{2(24s+1)}{3} \right\rceil$, we have
	\begin{equation*}
	\left(1+\frac{24s+1}{24n}\right)^{-\frac{3}{4}}= \sum_{\ell=0}^{N} \frac{\overline{A}_\ell\Bigl(\frac{3}{4},s\Bigr)}{n^{\frac{\ell}{2}}}+ O_{\leq Er_3(\frac{3}{4}, N,s)} \left(n^{-\frac{N+1}{2}}\right).
	\end{equation*}
\end{lemma}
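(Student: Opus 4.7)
The plan is to apply the generalized binomial theorem directly. Writing $x := \frac{24s+1}{24n}$, the hypothesis $n \ge \lceil \frac{2(24s+1)}{3}\rceil$ gives $x \le \frac{1}{16}$, so the series
\[
(1+x)^{-3/4} = \sum_{k=0}^{\infty} \binom{-3/4}{k} x^{k} = \sum_{k=0}^{\infty} \binom{-3/4}{k}\Bigl(\frac{24s+1}{24}\Bigr)^{k} n^{-k}
\]
converges absolutely. Since only integer powers of $1/n$ occur, the coefficient of $n^{-\ell/2}$ vanishes for odd $\ell$, matching the case definition of $\overline{A}_\ell(3/4,s)$ in \eqref{binomeqncoeffdef}. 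Truncating at $\ell=N$ leaves the tail
\[
R_N := \sum_{k \ge K_0} \binom{-3/4}{k}\Bigl(\frac{24s+1}{24n}\Bigr)^{k}, \qquad K_0 := \Bigl\lceil \tfrac{N+1}{2}\Bigr\rceil.
\]

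Next I would bound $|R_N|$. The ratio $|\binom{-3/4}{k}|/|\binom{-3/4}{k-1}| = (4k-1)/(4k) < 1$ gives $|\binom{-3/4}{k}| \le 1$ for all $k \ge 0$. Combined with the geometric bound coming from $x \le 1/16$,
\[
|R_N| \;\le\; \sum_{k\ge K_0} x^{k} \;\le\; \frac{x^{K_0}}{1-x} \;\le\; \tfrac{16}{15}\, x^{K_0}.
\]
It remains to compare $x^{K_0}$ with the target rate $\bigl(\frac{24s+1}{24}\bigr)^{(N+1)/2} n^{-(N+1)/2}$. When $N=2K+1$ is odd, $K_0=(N+1)/2$ and $x^{K_0}$ is exactly this quantity, giving $|R_N|\le \tfrac{16}{15}$ times the target, which is less than $\tfrac43$. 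When $N=2K$ is even, $K_0 = K+1 = (N+1)/2 + 1/2$, so
\[
x^{K_0} \;=\; x^{1/2}\cdot x^{(N+1)/2} \;\le\; \tfrac14 \cdot \Bigl(\tfrac{24s+1}{24}\Bigr)^{(N+1)/2} n^{-(N+1)/2},
\]
which yields $|R_N|\le \tfrac{4}{15}$ times the target, again well within $\tfrac{4}{3}$. Either way $|R_N|\le Er_3(3/4,N,s)\,n^{-(N+1)/2}$, proving the claim.

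There is no real obstacle: the only things that could go wrong are (i) whether the bound $|\binom{-3/4}{k}|\le 1$ holds (it does, by the trivial ratio argument above) and (ii) whether the jump between even and odd parity of $N$ produces the correct uniform constant $\tfrac43$, which the two-case computation above confirms.
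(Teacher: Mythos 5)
Your proof is correct and follows essentially the same route as the paper: the generalized binomial series, the trivial bound $\bigl|\binom{-3/4}{k}\bigr|\le 1$, and a geometric tail estimate using $\frac{24s+1}{24n}\le\frac{1}{16}$. The only cosmetic difference is that the paper sums the tail over half-integer exponents $\ell/2$ with ratio $\sqrt{x}\le\frac14$ (which is exactly where the constant $\frac43$ in $Er_3$ comes from), whereas you sum over integer exponents with ratio $x\le\frac1{16}$ and split by the parity of $N$, landing slightly below the stated bound in both cases.
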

\begin{proof}
	By Taylor expansion of 	$\left(1+\frac{24s+1}{2n}\right)^{-\frac{3}{4}}$, and \eqref{binomeqncoeffdef}, we get
	\begin{equation*}
	\left(1+\frac{24s+1}{24n}\right)^{-\frac{3}{4}}= \sum_{k_1=0}^{\infty} \binom{-\frac{3}{4}}{k_1}  \left(\sqrt{\frac{24s+1}{24n}}\right)^{2k_1}=:\sum_{\ell=0}^{N} \frac{\overline{A}_\ell(\frac{3}{4},s)}{n^{\frac{\ell}{2}}}+\sum_{\ell=N+1}^{\infty} \frac{\overline{A}_\ell(\frac{3}{4},s)}{n^{\frac{\ell}{2}}}.
	\end{equation*}
	
	It is clear that for all $\ell\in \mathbb{N}$,
	\begin{equation}\label{binomeqncoeffbound}
	\left|\overline{A}_\ell\Bigl(\frac{3}{4},s\Bigr)\right|\le \Bigl(\frac{24s+1}{24}\Bigr)^{\frac{\ell}{2}} \ \ \left(\text{since} \ \ \left|\binom{-\frac{3}{4}}{\frac{\ell}{2}}\right| \le 1\right).
	\end{equation}
	Thus for $n \ge \left\lceil \frac{2(24s+1)}{3} \right\rceil$, it follows that
	\begin{equation*}
	\left|\sum_{\ell=N+1}^{\infty}\frac{\overline{A}_\ell(\frac{3}{4},s)}{n^{\frac{\ell}{2}}}\right|\le\sum_{\ell=N+1}^{\infty} \left(\frac{24s+1}{24n}\right)^{\frac{\ell}{2}}\le Er_3\Bigl(\frac{3}{4},N,s\Bigr)\cdot n^{-\frac{N+1}{2}},
	\end{equation*}
	and this finishes the proof.
\end{proof}

Next, we combine Lemmas \ref{exp} and \ref{binom} to estimate an error bound after extracting the main terms for the factor $\frac{e^{\pi\sqrt{\frac{n}{3}}\left(\sqrt{1+\frac{24s+1}{24n}}-1\right)}}
{\Bigl(1+\frac{24s+1}{24n}\Bigr)^{3/4}}$.
Letting $Er_2(N,s)$ and $Er_3\left(\frac{3}{4},N,s\right)$ be as in \eqref{eqn2} and \eqref{eqn3} respectively, we define
\begin{align}\label{Er_{4}(N,s)}
Er_4(N,s)&:=\left(\frac{4}{3}N^{\frac 32}+1\right)Er_2(N,s)+\frac{\pi}{2\sqrt{3}}\Bigl(\frac{24s+1}{24}\Bigr)^{\frac{N+2}{2}}+ Er_3\left(\frac{3}{4}, N,s\right)
\nonumber\\
&\hspace{1 cm}\times\left(1+\frac{\pi}{2\sqrt{3}}\frac{24s+1}{24}
+\frac{\sqrt{\pi(24s+1)}}{72}\cosh\left(\pi \sqrt{\frac{24s+1}{72}}\right)\right).
\end{align}
\begin{lemma}\label{expbinom}
	Let $Er_4(N,s)$ be as given in \eqref{Er_{4}(N,s)}.	For $N \ge 1$ and $n \ge \lceil \frac{2(24s+1)}{3} \rceil$, we have
	\begin{equation*}
	\frac{e^{\pi\sqrt{\frac{n}{3}}\Bigl(\sqrt{1+\frac{24s+1}{24n}}-1\Bigr)}}{\Bigl(1+\frac{24s+1}{24n}\Bigr)^{3/4}}=\sum_{k=0}^{N}\frac{\overline{B}_k(s)}{n^{\frac{k}{2}}}+O_{\le Er_4(N,s)}\left(n^{-\frac{N+1}{2}}\right),
	\end{equation*}
where for $k\in \mathbb{N}_0$,
\begin{equation}\label{expbinomcoeffdef}
\overline{B}_{k}(s):=\sum_{\ell=0}^{k}
B_{\ell}(s)\overline{A}_{k-\ell}\Bigl(\frac{3}{4},s\Bigr).
\end{equation}	
\end{lemma}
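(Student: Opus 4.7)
The plan is to multiply the two asymptotic expansions supplied by Lemmas \ref{exp} and \ref{binom} term-by-term, identify $\sum_{m=0}^{N}\overline{B}_m(s)/n^{m/2}$ as the truncated Cauchy product via \eqref{expbinomcoeffdef}, and then carefully bound what remains. Writing
$$E:=e^{\pi\sqrt{n/3}\left(\sqrt{1+(24s+1)/(24n)}-1\right)},\qquad P:=\left(1+\frac{24s+1}{24n}\right)^{-3/4},$$
I would decompose
$$EP-\sum_{m=0}^{N}\frac{\overline{B}_m(s)}{n^{m/2}}=\Bigl(E-\sum_{k=0}^{N}\tfrac{B_k(s)}{n^{k/2}}\Bigr)P+\Bigl(\sum_{k=0}^{N}\tfrac{B_k(s)}{n^{k/2}}\Bigr)\Bigl(P-\sum_{\ell=0}^{N}\tfrac{\overline{A}_\ell(\frac{3}{4},s)}{n^{\ell/2}}\Bigr)+\sum_{m=N+1}^{2N}\frac{c_m(s)}{n^{m/2}},$$
where $c_m(s)=\sum_{k+\ell=m,\,0\le k,\ell\le N}B_k(s)\overline{A}_\ell(\frac{3}{4},s)$ is the leftover Cauchy coefficient.

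For the first piece I use Lemma \ref{exp} together with the elementary bound $|P|\le 1$ (since $(24s+1)/(24n)\ge 0$) to get a contribution of at most $Er_2(N,s)n^{-(N+1)/2}$. For the second piece I bound $P-\sum_{\ell=0}^{N}\overline{A}_\ell(\frac{3}{4},s)/n^{\ell/2}$ by $Er_3(\frac{3}{4},N,s)n^{-(N+1)/2}$ via Lemma \ref{binom}, then bound $|\sum_{k=0}^{N}B_k(s)/n^{k/2}|$ by splitting into $k=0$, $k=1$, and $k\ge 2$: the first gives $1$, the second gives $|B_1(s)|=\frac{\pi}{2\sqrt{3}}\cdot\frac{24s+1}{24}$ (from \eqref{expcoeffdef2}), and the tail $\sum_{k\ge 2}|B_k(s)|/n^{k/2}$ is controlled by \eqref{Bkfinalbound} together with the hypothesis $(24s+1)/(24n)\le 1/16$ coming from $n\ge \lceil 2(24s+1)/3\rceil$, which yields the geometric bound $\frac{\sqrt{\pi(24s+1)}}{72}\cosh\bigl(\pi\sqrt{(24s+1)/72}\bigr)$. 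Adding these three matches exactly the bracketed factor multiplying $Er_3(\frac{3}{4},N,s)$ in \eqref{Er_{4}(N,s)}.

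The third piece $\sum_{m=N+1}^{2N}c_m(s)/n^{m/2}$ is the main obstacle. The index range forces $k,\ell\ge 1$ in the defining sum of $c_m(s)$, and because $\overline{A}_\ell(\frac{3}{4},s)=0$ for odd $\ell$, the only $k=1$ contribution comes from $(k,\ell)=(1,N)$ (when $N$ is even), which produces precisely the boundary term $\frac{\pi}{2\sqrt{3}}(\frac{24s+1}{24})^{(N+2)/2}/n^{(N+1)/2}$ appearing in $Er_4(N,s)$. For the remaining $k\ge 2$ range I apply \eqref{Bkfinalbound} and \eqref{binomeqncoeffbound}, factor out $((24s+1)/24)^{(N+2)/2}/n^{(N+1)/2}$, and bound the remaining finite double sum of $((24s+1)/(24n))^{(k+\ell-N-1)/2}/((k-1)/2)^{3/2}$ by a geometric sum (using $(24s+1)/(24n)\le 1/16$) and the crude count of at most $N$ values of $\ell$, matching the shape $\frac{4}{3}N^{3/2}Er_2(N,s)$ up to the explicit constant in \eqref{Er_{4}(N,s)}.

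The delicate step will be making the third-piece bound match the precise coefficient $(\frac{4}{3}N^{3/2}+1)Er_2(N,s)$ rather than an unspecified multiple; this requires tracking the $\cosh$ factor and the constant $\frac{4}{3}\sqrt{2\pi/3}$ already hidden inside $Er_2(N,s)$ and verifying that the geometric-series estimate produces no residual $N$-dependence beyond the $N^{3/2}$ factor. Combining the three estimates, extracting $n^{-(N+1)/2}$, and adding the constants in the order they appear gives the stated $Er_4(N,s)$.
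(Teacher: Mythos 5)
Your proposal is correct and follows essentially the same route as the paper: the same three-term decomposition into $(E-S_B)P$, $S_B(P-S_A)$, and the leftover Cauchy coefficients, with the same bounds ($|P|\le 1$, the estimate \eqref{Bkfinalbound} with the ratio $\frac{24s+1}{24n}\le\frac{1}{16}$, and the isolation of the boundary term $\frac{\pi}{2\sqrt{3}}\bigl(\frac{24s+1}{24}\bigr)^{\frac{N+2}{2}}$). The constants you describe assemble into \eqref{Er_{4}(N,s)} exactly as in the paper's proof.
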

\begin{proof}
	Utilizing Lemmas \ref{exp} and \ref{binom}, we get
	\begingroup
	\allowdisplaybreaks
	\begin{align}\nonumber
	\frac{e^{\pi\sqrt{\frac{n}{3}}\Bigl(\sqrt{1+\frac{24s+1}{24n}}-1\Bigr)}}{\Bigl(1+\frac{24s+1}{24n}\Bigr)^{3/4}}&=\left(\sum_{\ell=0}^{N}\frac{B_\ell(s)}{n^{\frac{k}{2}}} + O_{\leq Er_2(N,s)}\left(n^{-\frac{N+1}{2}}\right)\right)\left(\sum_{\ell=0}^{N} \frac{\overline{A}_\ell(\frac 34,s)}{n^{\frac{\ell}{2}}}+ O_{\leq E_3(\frac{3}{4}, N,s)} \left(n^{-\frac{N+1}{2}}\right)\right)\\\nonumber
	&=\sum_{\ell=0}^{N}\frac{B_\ell(s)}{n^{\frac{\ell}{2}}}\sum_{\ell=0}^{N}\frac{\overline{A}_\ell(\frac 34,s)}{n^{\frac{\ell}{2}}}+\sum_{\ell=0}^{N}\frac{B_\ell(s)}{n^{\frac{\ell}{2}}}\cdot O_{\leq Er_3\left(\frac{3}{4}, N,s\right)} \left(n^{-\frac{N+1}{2}}\right) \\ \label{expbinomeqnsum}
	&\hspace{0.5 cm}+\Bigl(1+\frac{24s+1}{24n}\Bigr)^{-3/4}\cdot O_{\leq Er_2(N,s)} \left(n^{-\frac{N+1}{2}}\right). 
	\end{align}	
	\endgroup	
	Clearly, it follows that
\begingroup
\allowdisplaybreaks
\begin{align*}
\sum_{\ell=0}^{N}\frac{B_{\ell}(s)}{n^{\frac{\ell}{2}}}
\sum_{\ell=0}^{N}\frac{\overline{A}_\ell(\frac 34,s)}{n^{\frac{\ell}{2}}}
&=\sum_{k=0}^{N}\frac{1}{n^{\frac{k}{2}}}\sum_{\ell=0}^{k}
B_\ell(s)\overline{A}_{k-\ell}(\frac 34,s)+n^{-\frac{N+1}{2}}\sum_{k=0}^{N-1}\frac{1}{n^{\frac{k}{2}}}
\sum_{\ell=k}^{N-1}B_{\ell+1}(s)\overline{A}_{N+k-\ell}(\frac 34,s)
\\
&=:\sum_{k=0}^{N}\frac{\overline{B}_k(s)}{n^{\frac k2}}+T_1(N,s,n).
\end{align*}	
\endgroup	
Applying \eqref{Bkfinalbound} and \eqref{binomeqncoeffbound}, for $n\ge \lceil \frac{2(24s+1)}{3} \rceil$, we have	
\begin{align*}
&\left|T_1(N,s,n)\right|
\le n^{-\frac{N+1}{2}}\Biggl(\sum_{k=1}^{N-1}\frac{1}{n^{\frac{k}{2}}}
\sum_{\ell=k}^{N-1}\sqrt{\frac{2\pi}{3}}\frac{ \Bigl(\frac{24s+1}{24}\Bigr)^{\frac{N+k+2}{2}} }{\ell^{\frac{3}{2}}} \cosh\Bigl(\pi \sqrt{\frac{24s+1}{72}}\Bigr)
\\
&\hspace{3.8cm}+\frac{\pi}{2\sqrt{3}}\Bigl(\frac{24s+1}{24}\Bigr)^{\frac{N+2}{2}}
+\sum_{\ell=1}^{N-1}\sqrt{\frac{2\pi}{3}}\frac{ \Bigl(\frac{24s+1}{24}\Bigr)^{\frac{N+2}{2}} }{\ell^{\frac{3}{2}}} \cosh\Bigl(\pi \sqrt{\frac{24s+1}{72}}\Bigr)\Biggr)
\\
&\le n^{-\frac{N+1}{2}}\Biggl(\frac{N^{\frac 32}}{2}Er_2(N,s)\left(\sum_{k=1}^{N-1}
\Bigl(\frac{24s+1}{24n}\Bigr)^{\frac{k}{2}}
\sum_{\ell=k}^{N-1}\ell^{-\frac{3}{2}}
+\sum_{\ell=1}^{N-1}\ell^{-\frac{3}{2}}\right)
+\frac{\pi}{2\sqrt{3}}\Bigl(\frac{24s+1}{24}\Bigr)^{\frac{N+2}{2}}\Biggr)
\\
&\le n^{-\frac{N+1}{2}}\Biggl(\frac{4}{3}N^{\frac 32}Er_2(N,s)
+\frac{\pi}{2\sqrt{3}}\Bigl(\frac{24s+1}{24}\Bigr)^{\frac{N+2}{2}}\Biggr),
\end{align*}
where in the last step, we use that
\begin{equation}\label{bound-int}
\sum_{\ell=1}^{N-1}\ell^{-\frac{3}{2}}
\leq \sum_{\ell=1}^{\infty}\ell^{-\frac{3}{2}}=\zeta\left(\frac 32\right)\leq 2.
\end{equation}
Then letting
\begin{equation*}
Er_{4,1}(N,s):=\frac{4}{3}N^{\frac 32}Er_2(N,s)
+\frac{\pi}{2\sqrt{3}}\Bigl(\frac{24s+1}{24}\Bigr)^{\frac{N+2}{2}},
\end{equation*}
for all $N\ge 2$ and $n\ge \lceil \frac{2(24s+1)}{3} \rceil$, we obtain
\begin{equation}\label{expbinomMTsumfinal}
\sum_{\ell=0}^{N}\frac{B_k(s)}{n^{\frac{\ell}{2}}}
\sum_{\ell=0}^{N}\frac{\overline{A}_\ell(\frac 34,s)}{n^{\frac{\ell}{2}}}
=\sum_{k=0}^{N}\frac{\overline{B}_k(s)}{n^{\frac k2}}+O_{\le Er_{4,1}(N,s)}\left(n^{-\frac{N+1}{2}}\right).
\end{equation}	

	To finish the proof, it remains to estimate the two terms involving the factor $n^{-\frac{N+1}{2}}$ in \eqref{expbinomeqnsum}. It is clear that for all $n\ge 1$,
	\begin{equation}\label{expbinomsum4}
	\Bigl(1+\frac{24s+1}{24n}\Bigr)^{-3/4}\cdot O_{\leq Er_2(N,s)} \left(n^{-\frac{N+1}{2}}\right)=O_{\le Er_2(N,s)}\left(n^{-\frac{N+1}{2}}\right).
	\end{equation}
	For all $n\ge \lceil \frac{2(24s+1)}{3} \rceil$, employing \eqref{Bkfinalbound} gives that
	\begin{align*}
	\sum_{\ell=0}^{N}\frac{B_\ell(s)}{n^{\frac{\ell}{2}}}
	&\leq 1+\frac{\pi}{2\sqrt{3}}\frac{24s+1}{24}
	+\sum_{\ell=2}^{N}\sqrt{\frac{2\pi}{3}}\frac{ \Bigl(\frac{24s+1}{24}\Bigr)^{\frac{\ell+1}{2}} }{n^{\frac{\ell}{2}}(\ell-1)^{\frac{3}{2}}} \cosh\Bigl(\pi \sqrt{\frac{24s+1}{72}}\Bigr)
	\\ \notag
	&\leq 1+\frac{\pi}{2\sqrt{3}}\frac{24s+1}{24}
	+\frac{\sqrt{\pi(24s+1)}}{72}\cosh\Bigl(\pi \sqrt{\frac{24s+1}{72}}\Bigr).
	\end{align*}
	and therefore,
	\begin{align}\label{expbinomsum2}
	\sum_{k=0}^{N}\frac{B_k(s)}{n^{\frac{k}{2}}}\cdot O_{\leq Er_3\left(\frac{3}{4},N,s\right)} \left(n^{-\frac{N+1}{2}}\right)&\nonumber\\
	&\hspace{-3 cm}=O_{\le \left(1+\frac{\pi}{2\sqrt{3}}\frac{24s+1}{24}
		+\frac{\sqrt{\pi(24s+1)}}{72}\cosh\left(\pi \sqrt{\frac{24s+1}{72}}\right)\right) Er_3\left(\frac{3}{4}, N,s\right)}\left(n^{-\frac{N+1}{2}}\right).
	\end{align}
	Finally, combining \eqref{expbinomMTsumfinal}-\eqref{expbinomsum2} to \eqref{expbinomeqnsum}, we conclude the proof.
\end{proof}

For all $\ell\in \mathbb{N}_0$, define
\begin{align}\nonumber	\widehat{C}_{2\ell}(s)&:=\sum_{k=0}^{\ell}\binom{-k}{\ell-k}\frac{a_{2k}(1)}{(\frac{\pi}{\sqrt{3}})^{2k}}\Bigl(\frac{24s+1}{24}\Bigr)^{\ell-k},\\\label{besselcoeffdef} \widehat{C}_{2\ell+1}(s)&:=-\sum_{k=0}^{\ell}\binom{-\frac{2k+1}{2}}{\ell-k}\frac{a_{2k+1}(1)}{(\frac{\pi}{\sqrt{3}})^{2k+1}}\Bigl(\frac{24s+1}{24}\Bigr)^{\ell-k}.
\end{align}
\begin{lemma}\label{bessel}
	For $N \geq1$ and $n\ge \max\left\{\Bigl \lceil \frac{2(24s+1)}{3} \Bigr \rceil,\frac{48}{\pi^2}  \right\}$, we have
	\begin{equation*}
	\sum_{m=0}^{N}\frac{(-1)^m a_m(1)}{(\frac{\pi}{\sqrt{3}})^m n^{\frac{m}{2}}}\Biggl(1+\frac{24s+1}{24n}\Biggr)^{-\frac{m}{2}} =\sum_{m=0}^{N} \frac{\widehat{C}_m(s)}{n^{\frac{m}{2}}}+O_{\leq Er_5(N,s)}\left(n^{-\frac{N+1}{2}}\right),
	\end{equation*}
	where
	\begin{equation}\label{eqn5}
	\hspace{-0.2 cm}Er_5(N,s):=\frac{4\cdot |a_N(1)|}{3}
	\Bigl(\frac{24s+1}{24}\!+\!\frac{3}{\pi^2}\Bigr)^{\left\lfloor \frac{N}{2} \right\rfloor+1}\!+\!\frac{4\cdot|a_N(1)|}{\sqrt{3}\pi } \!\Biggl(\sqrt{\frac{24s+1}{24}}+\frac{\sqrt{3} }{\pi }\Biggr)^{2\left\lfloor \frac{N-1}{2} \right\rfloor+2}.
	\end{equation}
\end{lemma}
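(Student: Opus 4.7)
The plan is to Taylor-expand each factor $(1+\frac{24s+1}{24n})^{-m/2}$ as a binomial series, collect the resulting double sum by powers of $n^{-1/2}$ to isolate the main terms $\widehat{C}_\ell(s)$, and then explicitly estimate the truncation error to recover the claimed bound $Er_5(N,s)$.

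Writing $\alpha:=\frac{24s+1}{24}$ and $\beta:=\pi/\sqrt{3}$, I first split the outer sum by the parity of $m$, writing $m=2k$ or $m=2k+1$; for each $m$ I truncate the binomial expansion of $(1+\alpha/n)^{-m/2}$ at $j=J_m:=\lfloor(N-m)/2\rfloor$ so that after multiplication by $n^{-m/2}$ the retained terms are of order $n^{-\ell/2}$ with $\ell\leq N$. Rearranging the resulting double sum (by the same argument used for Lemma~\ref{exp} via Lemma~\ref{lemma2.3}), the retained terms organize into $\sum_{\ell=0}^N\widehat{C}_\ell(s)n^{-\ell/2}$, with the even-$m$ (respectively odd-$m$) contributions producing the even-$\ell$ (resp.\ odd-$\ell$) coefficients in \eqref{besselcoeffdef}. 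For the truncation error I use the Lagrange form of Taylor's remainder together with $(1+\xi)^{-m/2-J_m-1}\leq 1$ for $\xi\geq 0$, which yields the per-$m$ bound $|R_m|\leq \bigl|\binom{-m/2}{J_m+1}\bigr|(\alpha/n)^{J_m+1}$.

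To match the two summands of $Er_5(N,s)$ I estimate the even and odd contributions separately. In the even case ($m=2k$, $k\geq 1$) I use $|\binom{-k}{j}|=\binom{k+j-1}{j}$ together with the combinatorial identity $\sum_{k+j=i,\,k\geq 1}\binom{k+j-1}{j}x^k y^j=x(x+y)^{i-1}$ at $x=3/\pi^2$ and $y=\alpha$. Pulling out $|a_{2k}(1)|\leq|a_N(1)|$ and summing a geometric series $1/(1-(\alpha+3/\pi^2)/n)\leq 4/3$, valid under the standing hypothesis $n\geq\max\{\lceil 2(24s+1)/3\rceil,\,48/\pi^2\}$, produces the first summand of $Er_5(N,s)$. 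In the odd case ($m=2k+1$) the factor $|\binom{-(2k+1)/2}{j}|=\binom{k+j-1/2}{j}$ satisfies $\binom{k+j-1/2}{j}\leq\binom{2k+2j}{2j}$ by Legendre duplication, so summing against $(\sqrt{\alpha})^{2j}(\sqrt{3}/\pi)^{2k+1}$ converts the tail into a partial binomial expansion that is bounded by $(\sqrt{\alpha}+\sqrt{3}/\pi)^{2\lfloor(N-1)/2\rfloor+2}$, yielding the second summand.

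The main obstacle is controlling the $|a_m(1)|$ factors uniformly. From the product formula $a_m(1)=\prod_{k=1}^{m}\bigl(4-(2k-1)^2\bigr)/(m!\cdot 8^m)$, the ratio $|a_{m+1}(1)|/|a_m(1)|=(2m-1)(2m+3)/\bigl(8(m+1)\bigr)$ is at least $1$ exactly for $m\geq 3$, so $|a_m(1)|\leq|a_N(1)|$ holds whenever $3\leq m\leq N$; the finitely many exceptional indices $m\in\{1,2\}$ (relevant only when $N$ is small) are handled by direct verification, with the factor $4/3$ and the conservative exponent $\lfloor N/2\rfloor+1$ built into $Er_5(N,s)$ providing ample slack.
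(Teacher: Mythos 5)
Your argument is correct and follows essentially the same route as the paper's proof: the same split by parity of the Bessel index, the same identities $\left|\binom{-k}{j}\right|=\binom{k+j-1}{j}$ and $\left|\binom{-(2k+1)/2}{j}\right|\le\binom{2k+2j}{2j}$ feeding into binomial-theorem bounds of the form $(x+y)^{i}$, the same extraction of $|a_N(1)|$, and the same geometric-series constant $\tfrac{4}{3}$. The only cosmetic difference is that you truncate each binomial series and invoke the Lagrange remainder, whereas the paper retains the full expansion and bounds the infinite tail termwise; your explicit caveat about the non-monotonicity of $|a_m(1)|$ for small $m$ addresses a step the paper performs silently.
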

\begin{proof}
	We set
$$\widehat{b}_k:=\frac{(-1)^k a_k(1)}{(\frac{\pi}{\sqrt{3}})^k},\  \widehat{c}_{k,\ell}(s):=\Bigl(\frac{24s+1}{24}\Bigr)^\ell\binom{-\frac{k}{2}}{\ell},\ \text{and}\ w:=n^{-\frac 12},$$
and consequently, rewrite the sum as
\begingroup
\allowdisplaybreaks
\begin{align}\nonumber
\sum_{m=0}^{N}\frac{(-1)^m a_m(1)}{(\frac{\pi}{\sqrt{3}})^m n^{\frac{m}{2}}}\Biggl(1+\frac{24s+1}{24n}\Biggr)^{-\frac{m}{2}}
&=\sum_{k=0}^{N} \sum_{\ell=0}^{\infty} \widehat{b}_k \widehat{c}_{k,\ell}(s) w^{2\ell+k}\\\nonumber
&=\sum_{m=0}^{N}\widehat{C}_m(s)w^m+\sum_{m=\left\lfloor \frac{N}{2} \right\rfloor+1}^{\infty}\sum_{k=0}^{\left\lfloor \frac{N}{2} \right\rfloor} \widehat{b}_{2k} \widehat{c}_{2k,m-k}(s) w^{2m}\\\nonumber
&\hspace{1 cm}+\sum_{m=\left\lfloor \frac{N-1}{2} \right\rfloor+1}^{\infty} \sum_{k=0}^{\left\lfloor \frac{N-1}{2} \right\rfloor} \widehat{b}_{2k+1} \widehat{c}_{2k+1,m-k}(s) w^{2m+1}\\\label{besseleqn}
&=:\sum_{m=0}^{N} \widehat{C}_m(s) w^m +\widehat{E}_1\left(N,w\right)+ \widehat{E}_2\left(N,w\right).
\end{align}
\endgroup
On the one hand, we have
\begingroup
\allowdisplaybreaks
\begin{align}\nonumber
&\left|\widehat{E}_1(N,w)\right|\le \sum_{m=\left\lfloor \frac{N}{2} \right\rfloor+1}^{\infty} \sum_{k=0}^{\left\lfloor \frac{N}{2} \right\rfloor} |\widehat{b}_{2k}| |\widehat{c}_{2k,m-k}(s)| w^{2m}\ \ \left(\text{by}\ \eqref{besseleqn}\right)\\\nonumber
&=\sum_{m=\left\lfloor \frac{N}{2} \right\rfloor+1}^{\infty}\frac{1}{n^m} \sum_{k=0}^{\left\lfloor \frac{N}{2} \right\rfloor}\left|\frac{a_{2k}(1)}{(\frac{\pi}{\sqrt{3}})^{2k}}\right|\left|\binom{-k}{m-k}\right|\Bigl(\frac{24s+1}{24}\Bigr)^{m-k}\\\nonumber
&=\sum_{m=\left\lfloor \frac{N}{2} \right\rfloor+1}^{\infty}\frac{1}{n^m} \sum_{k=0}^{\left\lfloor \frac{N}{2} \right\rfloor}\binom{m-1}{k-1}\left|\frac{a_{2k}(1)}{(\frac{\pi}{\sqrt{3}})^{2k}}\right|\Bigl(\frac{24s+1}{24}\Bigr)^{m-k}\\\nonumber
&\le |a_N(1)|\sum_{m=\left\lfloor \frac{N}{2} \right\rfloor+1}^{\infty}\Bigl(\frac{24s+1}{24n}\Bigr)^m \sum_{k=0}^{m-1}\binom{m-1}{k-1}\left( \frac{3}{\pi^2}\cdot \frac{24}{24s+1}\right)^{k}\nonumber \\
&\le |a_N(1)|\sum_{m=\left\lfloor \frac{N}{2} \right\rfloor+1}^{\infty}\Bigl(\frac{24s+1}{24n}+\frac{3}{\pi^2 n}\Bigr)^m
\nonumber\\
&\le \frac{4\cdot |a_N(1)|}{3}
\Bigl(\frac{24s+1}{24n}+\frac{3}{\pi^2 n}\Bigr)^{\left\lfloor \frac{N}{2} \right\rfloor+1} \ \ \left(\forall\ n\ge \max\left\{\Bigl \lceil \frac{24s+1}{3} \Bigr \rceil,\frac{24}{\pi^2}  \right\}\right) \nonumber \\ \label{besseleqnER1}
&\le \frac{4\cdot |a_N(1)|}{3}
\Bigl(\frac{24s+1}{24}+\frac{3}{\pi^2}\Bigr)^{\left\lfloor \frac{N}{2} \right\rfloor+1} n^{-\frac{N+1}{2}}.	\end{align}
\endgroup
On the other hand, we estimate
\begingroup
\allowdisplaybreaks
\begin{align}\nonumber
&\left|\widehat{E}_2(N,w)\right|\\\nonumber
&\le \sum_{m=\left\lfloor \frac{N-1}{2} \right\rfloor+1}^{\infty} \frac{1}{n^{m+\frac{1}{2}}} \sum_{k=0}^{\left\lfloor \frac{N-1}{2} \right\rfloor} |\widehat{b}_{2k+1}| |\widehat{c}_{2k+1,m-k}(s)|\ \ \left(\text{by}\ \eqref{besseleqn}\right)\\\nonumber
&=\sum_{m=\left\lfloor \frac{N-1}{2} \right\rfloor+1}^{\infty}\frac{1}{n^{m+\frac{1}{2}}} \sum_{k=0}^{\left\lfloor \frac{N-1}{2} \right\rfloor}\left|\frac{a_{2k+1}(1)}{(\frac{\pi}{\sqrt{3}})^{2k+1}}\right|\left|\frac{\binom{2m}{2k}\binom{2m-2k}{m-k}}{4^{m-k}\binom{m}{k}}\right|\Bigl(\frac{24s+1}{24}\Bigr)^{m-k}\\\nonumber
&\le \frac{\sqrt{3} \cdot |a_N(1)|}{\pi} \sum_{m=\left\lfloor \frac{N-1}{2} \right\rfloor+1}^{\infty}\frac{1}{n^{m+\frac{1}{2}}} \sum_{k=0}^{\left\lfloor \frac{N-1}{2} \right\rfloor}\binom{2m}{2k}\left( \frac{3}{\pi^2}\cdot \frac{24}{24s+1}\right)^k \Biggl(\frac{24s+1}{24}\Biggr)^m
\nonumber\\
&\le \frac{\sqrt{3} \cdot |a_N(1)|}{\pi n^{\frac{1}{2}}} \sum_{m=\left\lfloor \frac{N-1}{2} \right\rfloor+1}^{\infty}\Biggl(\frac{24s+1}{24n}\Biggr)^{m}  \sum_{k=0}^{2m}\binom{2m}{2k} \left( \frac{3}{\pi^2}\cdot \frac{24}{24s+1}\right)^k
\nonumber \\
&\le \frac{\sqrt{3} \cdot |a_N(1)|}{\pi n^{\frac{1}{2}}} \sum_{m=\left\lfloor \frac{N-1}{2} \right\rfloor+1}^{\infty}\Biggl(\sqrt{\frac{24s+1}{24n}}+\frac{\sqrt{3} }{\pi \sqrt{n}}\Biggr)^{2m}
\nonumber\\ \label{besseleqnER2}
&\le  \frac{4\cdot|a_N(1)|}{\sqrt{3}\pi } \Biggl(\sqrt{\frac{24s+1}{24}}+\frac{\sqrt{3} }{\pi }\Biggr)^{2\left\lfloor \frac{N-1}{2} \right\rfloor+2} n^{-\frac{N+1}{2}} \ \ \left(\forall\ n\ge \max\left\{\Bigl \lceil \frac{2(24s+1)}{3} \Bigr \rceil,\frac{48}{\pi^2}  \right\}\right).
\end{align}
\endgroup
Applying \eqref{besseleqnER1} and \eqref{besseleqnER2} to \eqref{besseleqn}, we conclude the proof.	
\end{proof}

	\begin{lemma}\label{besselER}
	Let $Er_{N,1}$ and $Er_5(N,s)$ be as given in \eqref{eqn1} and \eqref{eqn5} separately. For $N \ge 1$ and $n\ge \max\left\{\Bigl \lceil \frac{2(24s+1)}{3} \Bigr \rceil,\frac{48}{\pi^2}  \right\}$, we have
	\begingroup
	\allowdisplaybreaks
	\begin{align*}
	&\left(\sum_{m=0}^{N}\frac{(-1)^m a_m(1)}{(\frac{\pi}{\sqrt{3}})^m n^{\frac{m}{2}}}\Biggl(1+\frac{24s+1}{24n}\Biggr)^{-\frac{m}{2}}+O_{\le Er_{N,1}}\left(n^{-\frac{N+1}{2}}\right) \right) \left(1+O_{\le 4}\left(\frac{3^{\frac{N+1}{2}}}{\pi^{N+1}}n^{-\frac{N+1}{2}}\right)\right)\\
	&\hspace{7.5 cm}=\sum_{m=0}^{N} \frac{\widehat{C}_m(s)}{n^{\frac{m}{2}}}+O_{\leq Er_6(N,s)}\left(n^{-\frac{N+1}{2}}\right),
	\end{align*}
	\endgroup
	where
	\begin{align}\label{eqn6}
	Er_6(N,s):=8 \left(\frac{\sqrt{3}}{\pi}\right)^{N+1} |a_N(1)|
	+\left(1+4\left(\frac{3}{\pi\sqrt{2(24s+1)}}\right)^{N+1}\right)(Er_5(N,s)+Er_{N,1}).
	\end{align}
\end{lemma}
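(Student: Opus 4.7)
The strategy is to combine the expansion from Lemma~\ref{bessel} (which handles the inner weighted Bessel-type sum) with the trailing multiplicative perturbation $1+O_{\le 4}(3^{(N+1)/2}\pi^{-N-1}n^{-(N+1)/2})$, and to carefully bookkeep how the resulting error constants assemble into $Er_6(N,s)$.

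First, I apply Lemma~\ref{bessel} to replace $\sum_{m=0}^{N}\frac{(-1)^m a_m(1)}{(\pi/\sqrt 3)^m n^{m/2}}(1+\tfrac{24s+1}{24n})^{-m/2}$ by $\mathcal{A}:=\sum_{m=0}^{N}\widehat{C}_m(s)/n^{m/2}$, modulo an $O_{\le Er_5(N,s)}(n^{-(N+1)/2})$ error. Merging this with the pre-existing $O_{\le Er_{N,1}}$ term via the triangle inequality rewrites the first factor as $\mathcal{A}+R$, where $|R|\le (Er_5(N,s)+Er_{N,1})\,n^{-(N+1)/2}$. Denoting by $\epsilon$ the second factor's perturbation, with $|\epsilon|\le 4(\sqrt 3/\pi)^{N+1}n^{-(N+1)/2}$, I distribute
\[
(\mathcal{A}+R)(1+\epsilon) \;=\; \mathcal{A} \;+\; \bigl(R + \mathcal{A}\epsilon + R\epsilon\bigr),
\]
so the task reduces to bounding $|R + \mathcal{A}\epsilon + R\epsilon| \le Er_6(N,s)\,n^{-(N+1)/2}$.

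For the $R$ and $R\epsilon$ pieces, the hypothesis $n\ge \lceil 2(24s+1)/3\rceil$ converts $(\sqrt 3/\pi)^{N+1}n^{-(N+1)/2}$ into $(3/(\pi\sqrt{2(24s+1)}))^{N+1}$, which produces exactly the $(1+4(3/(\pi\sqrt{2(24s+1)}))^{N+1})(Er_5+Er_{N,1})$ summand of $Er_6$. For the $\mathcal{A}\epsilon$ piece, I would invert Lemma~\ref{bessel} to write $\mathcal{A}$ back in terms of the weighted Bessel partial sum, apply $(1+\tfrac{24s+1}{24n})^{-m/2}\le 1$ termwise, and then dominate $\sum_{m=0}^{N} |a_m(1)|(\sqrt 3/\pi)^m n^{-m/2}$ by $2|a_N(1)|$; multiplying by $|\epsilon|$ supplies the $8(\sqrt 3/\pi)^{N+1}|a_N(1)|$ summand of $Er_6$. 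Adding the three bounds then reconstructs $Er_6(N,s)$ as defined in \eqref{eqn6}.

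The main obstacle is precisely the uniform bound $|\mathcal{A}|\le 2|a_N(1)|$ needed for the $\mathcal{A}\epsilon$ estimate. Since individual summands such as $\widehat{C}_0(s)=1$ may themselves exceed $|a_N(1)|$ in modulus, a term-by-term comparison fails; one must instead exploit the geometric decay of the ratio $|a_{m+1}(1)|(\sqrt 3/\pi)/(|a_m(1)|\sqrt n)$ guaranteed by $n\ge 48/\pi^2$ to concentrate the partial sum onto its extremal term. Every other step is routine triangle-inequality bookkeeping.
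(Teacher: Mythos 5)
Your overall decomposition $(\mathcal{A}+R)(1+\epsilon)=\mathcal{A}+R(1+\epsilon)+\mathcal{A}\epsilon$, with $\mathcal{A}=\sum_{m=0}^{N}\widehat{C}_m(s)n^{-m/2}$, and your treatment of the $R$ and $R\epsilon$ pieces coincide with the paper's proof, including the conversion of $\bigl(\sqrt{3}/(\pi\sqrt{n})\bigr)^{N+1}$ into $\bigl(3/(\pi\sqrt{2(24s+1)})\bigr)^{N+1}$ via $n\ge\lceil 2(24s+1)/3\rceil$. One bookkeeping caveat: you cannot simply ``invert Lemma~\ref{bessel}'' to replace $\mathcal{A}$ by the weighted Bessel partial sum, since the two differ by $O_{\le Er_5(N,s)}\bigl(n^{-(N+1)/2}\bigr)$, and carrying that difference through the multiplication by $\epsilon$ produces an extra contribution of size $4\bigl(3/(\pi\sqrt{2(24s+1)})\bigr)^{N+1}Er_5(N,s)$ that is not budgeted for in \eqref{eqn6}. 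The paper avoids this by bounding $|\mathcal{A}|$ directly from the closed forms \eqref{besselcoeffdef}, using $\bigl|\binom{-k}{m-k}\bigr|=\binom{m-1}{k-1}$ and its half-integer analogue.

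The more serious problem is the mechanism you propose for the key estimate $|\mathcal{A}|\le 2|a_N(1)|$. A geometric-decay argument concentrates the partial sum on its largest term, which here is the $m=0$ term $\widehat{C}_0(s)=1$; it can therefore only yield a bound of the shape $2|a_0(1)|=2$, not $2|a_N(1)|$. Since $2|a_N(1)|<1$ for small $N$ (e.g.\ $2|a_1(1)|=3/4$ and $2|a_2(1)|=15/64$), while $\mathcal{A}\to 1$ as $n\to\infty$, no argument that dominates the sum by a multiple of its leading term can produce the stated constant --- indeed the inequality $|\mathcal{A}|\le 2|a_N(1)|$ is itself false for such $N$. (To be fair, the paper's own proof asserts the same bound via an implicit comparison $|a_j(1)|\le|a_N(1)|$ for $j\le N$, which has the same defect for small $N$; the constant is only unproblematic for the large values $N=14,24$ used in the applications, where $|a_N(1)|\gg 1$.) So, as written, your proof of the $\mathcal{A}\epsilon$ estimate is a genuine gap rather than an omitted routine computation: the step you defer is precisely the step that cannot be carried out by the method you indicate.
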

\begin{proof}
	Using Lemma \ref{bessel}, for all $n\ge \max\left\{\Bigl \lceil \frac{2(24s+1)}{3} \Bigr \rceil,\frac{48}{\pi^2}  \right\}$, we get
	\begingroup
	\allowdisplaybreaks
	\begin{align*}
	\sum_{m=0}^{N}\frac{(-1)^m a_m(1)}{(\frac{\pi}{\sqrt{3}})^m n^{\frac{m}{2}}}\Biggl(1+\frac{24s+1}{24n}\Biggr)^{-\frac{m}{2}}&+O_{\le Er_{N,1}}\left(n^{-\frac{N+1}{2}}\right)\\
	&=\sum_{m=0}^{N} \frac{\widehat{C}_m(s)}{n^{\frac{m}{2}}} +O_{\leq {(Er_5(N,s)+Er_{N,1})}}\left(n^{-\frac{N+1}{2}}\right),
	\end{align*}
	\endgroup
	and hence,
	\begin{align}\nonumber
	&\left(\sum_{m=0}^{N}\frac{(-1)^m a_m(1)}{(\frac{\pi}{\sqrt{3}})^m n^{\frac{m}{2}}}\Biggl(1+\frac{24s+1}{24n}\Biggr)^{-\frac{m}{2}}+O_{\le Er_{N,1}}\left(n^{-\frac{N+1}{2}}\right) \right) \left(1+O_{\le 4}\left(\frac{3^{\frac{N+1}{2}}}{\pi^{N+1}}n^{-\frac{N+1}{2}}\right)\right)\\\label{besselERsum}
	&=\sum_{m=0}^{N} \frac{\widehat{C}_m(s)}{n^{\frac{m}{2}}}+\sum_{m=0}^{N} \frac{\widehat{C}_m(s)}{n^{\frac{m}{2}}}\cdot O_{\le 4}\left(\frac{3^{\frac{N+1}{2}}}{\pi^{N+1}}n^{-\frac{N+1}{2}}\right)
	+O_{\leq Er_{6,1}(N,s)}\left(n^{-\frac{N+1}{2}}\right),
	\end{align}
	where
	\begin{equation*}
	Er_{6,1}(N,s):=  \left(1+4\left(\frac{3}{\pi\sqrt{2(24s+1)}}\right)^{N+1}\right)
	(Er_5(N,s)+Er_{N,1}).
	\end{equation*}
	For all $n\ge \max\left\{\Bigl \lceil \frac{2(24s+1)}{3} \Bigr \rceil,\frac{48}{\pi^2}  \right\}$, we obtain
	\begingroup
	\allowdisplaybreaks
	\begin{align*}
	\left|\sum_{m=0}^{N} \frac{\widehat{C}_m(s)}{n^{\frac{m}{2}}}\right|&\le \sum_{m=0}^{\left\lfloor \frac{N}{2} \right\rfloor} \frac{\left |\widehat{C}_{2m}(s)\right|} {n^{m}}+\sum_{m=0}^{\left\lfloor \frac{N-1}{2} \right\rfloor} \frac{\left|\widehat{C}_{2m+1}(s)\right|} {n^{m+\frac{1}{2}}}\\
	&\le \sum_{m=0}^{\left\lfloor \frac{N}{2} \right\rfloor}\frac{1}{n^m}\sum_{k=0}^{m}\binom{m-1}{k-1}\frac{\left|a_{2k}(1)\right|}{(\frac{\pi}{\sqrt{3}})^{2k}}\Bigl(\frac{24s+1}{24}\Bigr)^{m-k}\\
	&+\sum_{m=0}^{\left\lfloor \frac{N-1}{2} \right\rfloor} \frac{1}{n^{m+\frac 12}}\sum_{k=0}^{m}\left|\binom{-\frac{2k+1}{2}}{m-k}\right|\frac{\left|a_{2k+1}(1)\right|}{(\frac{\pi}{\sqrt{3}})^{2k+1}}\Bigl(\frac{24s+1}{24}\Bigr)^{m-k}\ \left(\text{by Lemma}\ \ref{bessel}\right)\\
	&=\sum_{m=0}^{\left\lfloor \frac{N}{2} \right\rfloor}\frac{1}{n^m}\sum_{k=0}^{m}\binom{m-1}{k-1}\frac{\left|a_{2k}(1)\right|}{(\frac{\pi}{\sqrt{3}})^{2k}}\Bigl(\frac{24s+1}{24}\Bigr)^{m-k}\\
	&\hspace{3 cm}+\sum_{m=0}^{\left\lfloor \frac{N-1}{2} \right\rfloor} \frac{1}{n^{m+\frac 12}}\sum_{k=0}^{m}\frac{\binom{2m}{2k}\binom{2m-2k}{m-k}}{4^{m-k}\binom{m}{k}}\frac{\left|a_{2k+1}(1)\right|}{(\frac{\pi}{\sqrt{3}})^{2k+1}}\Bigl(\frac{24s+1}{24}\Bigr)^{m-k}\\
	&\le 2 \cdot |a_N(1)|.
	\end{align*}
	\endgroup
	Therefore,
	\begin{equation}\label{besselERsum2}
	\sum_{m=0}^{N} \frac{\widehat{C}_m(s)}{n^{\frac{m}{2}}}\cdot O_{\le 4}\left(\frac{3^{\frac{N+1}{2}}}{\pi^{N+1}}n^{-\frac{N+1}{2}}\right)=O_{\le 8\cdot 3^{\frac{N+1}{2}}\cdot\pi^{-N-1}|a_N(1)|}\left(n^{-\frac{N+1}{2}}\right).
	\end{equation}
	Applying \eqref{besselERsum2} to \eqref{besselERsum}, we conclude the proof.
\end{proof}

	\section{Proof of Theorem 1.1}\label{sec-asym-thm}
In this section, by making use of the estimations of each
terms in the asymptotic expansion which were presented in Section \ref{sec-pre}, we prove Theorem \ref{MainTh}.

{\noindent\it Proof of Theorem \ref{MainTh}.}	
Applying Lemmas \ref{expbinom} and \ref{besselER} to
\eqref{q(n)second}, for $n \ge n(N,s)$ (cf. \eqref{n(N,s)}), we have
\begingroup
\allowdisplaybreaks
\begin{align*}
&q(n+s)\\
&=\frac{e^{ \pi \sqrt{\frac{n}{3}}}}{4\cdot3^{1/4} n^{\frac{3}{4}}}\left(\sum_{m=0}^{N}\frac{\overline{B}_m(s)}{n^{\frac{m}{2}}}+O_{\le Er_4(N,s)}\left(n^{-\frac{N+1}{2}}\right)\right)\left(\sum_{m=0}^{N} \frac{\widehat{C}_m(s)}{n^{\frac{m}{2}}}+O_{\leq Er_6(N,s)}\left(n^{-\frac{N+1}{2}}\right)\right).
\end{align*}
\endgroup
Now, expanding the above, it gives
\begingroup
\allowdisplaybreaks
\begin{align*}
&\left(\sum_{m=0}^{N}\frac{\overline{B}_m(s)}{n^{\frac{m}{2}}}+O_{\le Er_4(N,s)}\left(n^{-\frac{N+1}{2}}\right)\right)\left(\sum_{m=0}^{N} \frac{\widehat{C}_m(s)}{n^{\frac{m}{2}}}+O_{\leq Er_6(N,s)}\left(n^{-\frac{N+1}{2}}\right)\right)
\\
&=\sum_{m=0}^{N}\sum_{k=0}^{m}\frac{\overline{B}_k(s)\cdot \widehat{C}_{m-k}(s)}{n^{\frac m2}}+n^{-\frac{N+1}{2}}\sum_{m=0}^{N-1}\frac{1}{n^{\frac m2}}\sum_{k=m}^{N-1}\overline{B}_{k+1}(s)\cdot \widehat{C}_{N+m-k}(s)
\\
&\hspace{3 cm}+\sum_{m=0}^{N}\frac{\overline{B}_m(s)}{n^{\frac{m}{2}}}\cdot O_{\leq Er_6(N,s)}\left(n^{-\frac{N+1}{2}}\right)+\sum_{m=0}^{N} \frac{\widehat{C}_m(s)}{n^{\frac{m}{2}}}\cdot O_{\le Er_4(N,s)}\left(n^{-\frac{N+1}{2}}\right)
\\
&\hspace{8.75 cm}+O_{\le {\frac{Er_4(N,s)\cdot Er_6(N,s)}{n(N,s)^{\frac{N+1}{2}}}}}\left(n^{-\frac{N+1}{2}}\right)
\\
&=:\sum_{m=0}^{N}\frac{\widehat{B}_m(s)}{n^{\frac m2}}+\overline{Er}_1(n, N,s)+\overline{Er}_2(n, N,s)+\overline{Er}_3(n, N,s)+O_{\le {\frac{Er_4(N,s)\cdot Er_6(N,s)}{n(N,s)^{\frac{N+1}{2}}}}}\left(n^{-\frac{N+1}{2}}\right),
\end{align*}
\endgroup
where for $\ell\in \mathbb{N}_0$,
\begin{equation}\label{finalcoeffdef}
\widehat{B}_\ell(s):=\sum_{k=0}^{\ell}\overline{B}_k(s)\cdot \widehat{C}_{\ell-k}(s).
\end{equation}
	We first estimate $\overline{B}_m(s)$ and $\widehat{C}_m(s)$. For $m\geq 2$, Using \eqref{Bkfinalbound} and \eqref{bound-int}, we have
\begin{align}\label{BbarfinalBD}
\left|\overline{B}_{m}(s)\right|
&=\left|\sum_{\ell=0}^{m}B_{\ell}(s)\overline{A}_{m-\ell}\Bigl(\frac{3}{4},s\Bigr)\right|\ \left(\text{by}\ \eqref{expbinomcoeffdef}\right)
\nonumber\\
&\le B_0(s)\left|\overline{A}_{m}\Bigl(\frac{3}{4},s\Bigr)\right|
+\left|B_{1}(s)\right|\overline{A}_{m-1}\Bigl(\frac 34,s\Bigr)
+\sum_{\ell=2}^{m}\left|B_{\ell}(s)\right|
\cdot\left|\overline{A}_{m-\ell}\Bigl(\frac{3}{4},s\Bigr)\right|
\nonumber\\
&\le \left(\frac{24s+1}{24}\right)^{\frac m2}\Biggl(1+\frac{\pi}{2\sqrt{3}}\sqrt{\frac{24s+1}{24}}
+\sqrt{\frac{2\pi}{3}}\sqrt{\frac{24s+1}{24}}
\cosh\Bigl(\pi \sqrt{\frac{24s+1}{72}}\Bigr)\sum_{\ell=2}^{m}(l-1)^{-\frac{3}{2}}\Biggr)
\nonumber\\
&\le \left(\frac{24s+1}{24}\right)^{\frac m2}\Biggl(1+\frac{\pi}{2\sqrt{3}}\sqrt{\frac{24s+1}{24}}+
\frac{\sqrt{\pi(24s+1)}}{12}\cosh\Bigl(\pi \sqrt{\frac{24s+1}{72}}\Bigr)\Biggr).
\end{align}
Next, we have for all $m\in \mathbb{N}$ as
\begingroup
\allowdisplaybreaks
\begin{align}\nonumber
\left|\widehat{C}_{2m}(s)\right|&\le \sum_{k=0}^{m}\left|\binom{-k}{m-k}\right|\frac{\left|a_{2k}(1)\right|}{(\frac{\pi}{\sqrt{3}})^{2k}}\left(\frac{24s+1}{24}\right)^{m-k}\\\label{ChatBD1}
&\le |a_{2m}(1)|\left(\frac{24s+1}{24}\right)^{m}\sum_{k=0}^{m}\binom{m-1}{k-1}\le |a_{2m}(1)|\cdot \left(\frac{24s+1}{12}\right)^{m},
\end{align}
\endgroup
and in a similar fashion, we get for $m\in \mathbb{N}$,
\begin{equation}\label{ChatBD2}
\left|\widehat{C}_{2m+1}(s)\right|\le  |a_{2m+1}(1)|\cdot  \left(\frac{24s+1}{6}\right)^{m}.
\end{equation}
From \eqref{ChatBD1}, \eqref{ChatBD2} and the fact that $\widehat{C}_0(s)=1$ (by \eqref{besselcoeffdef}, it follows that
\begin{equation}\label{ChatfinalBD}
\left|\widehat{C}_m(s)\right|\le  |a_{m}(1)|\cdot \left(\frac{24s+1}{6}\right)^{\frac m2}.
\end{equation}
With the above estimation, now we can estimate $\overline{Er}_{i}(n,N,s)$ for all $i=1,2,3$.
Setting
\begin{equation}\label{eqn8}
A(s):=1+\frac{\pi}{2\sqrt{3}}\sqrt{\frac{24s+1}{24}}+
\frac{\sqrt{\pi(24s+1)}}{12}\cosh\Bigl(\pi \sqrt{\frac{24s+1}{72}}\Bigr),
\end{equation}
by \eqref{BbarfinalBD} and \eqref{ChatfinalBD}, we see that, for $n \ge \left\lceil \frac{2(24s+1)}{3} \right \rceil$,
\begingroup
\allowdisplaybreaks
\begin{align}\nonumber
&\left|\overline{Er}_1(n,N,s)\right|\le \left| n^{-\frac{N+1}{2}}\sum_{m=0}^{N-1}\frac{1}{n^{\frac m2}}\sum_{k=m}^{N-1}\overline{B}_{k+1}(s)\cdot \widehat{C}_{N+m-k}(s)\right|\\\nonumber
&\le \Biggl(\frac{\pi|a_N(1)|\cdot2^{N-1}}{\sqrt{3}}\left(\frac{24s+1}{24}\right)
^{\frac{N}{2}+1}
+A(s)\sum_{k=1}^{N-1}\left(\frac{24s+1}{24}\right)^{\frac{k+1}{2}}
|a_{N-k}(1)|\left(\frac{24s+1}{6}\right)^{\frac{N-k}{2}}
\\\nonumber
&\hspace{2cm}+A(s)\sum_{m=1}^{N-1}\frac{1}{n^{\frac m2}}\sum_{k=m}^{N-1}\left(\frac{24s+1}{24}\right)^{\frac{k+1}{2}}
|a_{N+m-k}(1)|\left(\frac{24s+1}{6}\right)^{\frac{N+m-k}{2}}\Biggr)
n^{-\frac{N+1}{2}}\\\nonumber
&\le \Biggl(\frac{\pi|a_N(1)|\cdot2^{N-1}}{\sqrt{3}}\left(\frac{24s+1}{24}\right)
^{\frac{N}{2}+1}+A(s)|a_N(1)| \Bigl(\frac{24s+1}{24}\Bigr)^{\frac{N+1}{2}}
\\\nonumber
&\hspace{2cm}+A(s)|a_N(1)|\cdot \Bigl(\frac{24s+1}{6}\Bigr)^{\frac{N}{2}}\left(\frac{24s+1}{24}\right)
^{\frac 12}\sum_{m= 1}^{N-1}\frac{1}{2^m}
\sum_{k=m}^{N-1}\frac{1}{2^k}\Biggr)n^{-\frac{N+1}{2}}\\\label{finaleqnsumER1}
&\le |a_N(1)|\left(\frac{\pi\cdot2^{N-1}}{\sqrt{3}}\sqrt{\frac{24s+1}{24}}+A(s) \left(1+\frac{2^{N+1}}{3}\right) \right)\Bigl(\frac{24s+1}{24}\Bigr)^{\frac{N+1}{2}}n^{-\frac{N+1}{2}}.
\end{align}
\endgroup
For all $n \ge \left\lceil \frac{2(24s+1)}{3} \right \rceil$, we get
\begingroup
\allowdisplaybreaks
\begin{align}\label{eqn9}
\left|\overline{Er}_2(n,N,s)\right|
&\le Er_6(N,s)\cdot n^{-\frac{N+1}{2}}\sum_{m=0}^{N}\frac{\left|\overline{B}_m(s)\right|}{n^{\frac m2}}
\nonumber\\
&\le \left(1+\frac{\pi}{2\sqrt{3}}\frac{24s+1}{24}+ \frac{A(s)}{12} \right) Er_6(N,s) n^{-\frac{N+1}{2}},
\end{align}
\endgroup
and
\begingroup
\allowdisplaybreaks
\begin{align}\label{eqn10}
\left|\overline{Er}_3(n,N,s)\right|&\le Er_4(N,s)\cdot n^{-\frac{N+1}{2}}\sum_{m=0}^{N}\frac{\left|\widehat{C}_m(s)\right|}{n^{\frac m2}}\le 2|a_N(1)|Er_4(N,s) n^{-\frac{N+1}{2}}.
\end{align}
\endgroup
Combining \eqref{finaleqnsumER1}, \eqref{eqn9}, and \eqref{eqn10} with following \eqref{finalerrordef}, we finish the proof of Theorem \ref{MainTh}.
\qed

\section{Inequalities for $q(n)$}\label{sec-ineq}
In this section, we prove all the inequalities by applying Theorem \ref{MainTh} and choosing appropriate values of $N$.

From Theorem \ref{MainTh}, we have for $(s,N)\in \mathbb{N}_0\times \mathbb{N}$ and $n\ge n(N,s)$,
\begin{equation}\label{Maintheqn1}
\frac{e^{\pi\sqrt{\frac{n}{3}}}}{4\cdot3^{1/4}n^{3/4}}L\left(n,s, N\right)\le q(n+s)\le \frac{e^{\pi\sqrt{\frac{n}{3}}}}{4\cdot3^{1/4}n^{3/4}}U\left(n,s, N\right),
\end{equation}
where
\begin{align*}
L\left(n,s, N\right):=\sum_{m=0}^{N}\frac{\widehat{B}_m(s)}{n^{\frac m2}}-\frac{Er_N(s)}{n^{\frac{N+1}{2}}},\quad U\left(n,s, N\right):=\sum_{m=0}^{N}\frac{\widehat{B}_m(s)}{n^{\frac m2}}+\frac{Er_N(s)}{n^{\frac{N+1}{2}}}.
\end{align*}

{\noindent\it Proof of Theorem \ref{A-thm}.}
Making the shift $n\to n+1$, it is equivalent to show that for $n\ge 229$,
$$A\left(q(n),q(n+1), q(n+2), q(n+3), q(n+4)\right)>0.$$
Following \eqref{Maintheqn1}, we have
\begin{align*}
A\left(q(n),q(n+1), q(n+2), q(n+3), q(n+4)\right)&\\
&\hspace{-7 cm}>\left(\frac{e^{\pi\sqrt{\frac{n}{3}}}}{4\cdot3^{1/4}n^{3/4}}\right)^2\left(L(n,0,N)L(n,4,N)+3L^2(n,2,N)-4U(n,1,N)U(n,3,N)\right).
\end{align*}
Thus to prove the theorem, we first aim to show that
\begin{equation}\label{ineq1}
L(n,0,N)L(n,4,N)+3L^2(n,2,N)-4U(n,1,N)U(n,3,N)>0.
\end{equation}
Choosing $N=14$ and with \texttt{Reduce} command in Mathematica, for $n\ge 2469$, we have \eqref{ineq1}. Computing
$$\underset{s\in\{0,1,2,3,4\}}\max\{n(14,s)\}\le 5019,$$
we finally conclude that for $n\ge 5019$,
$$A\left(q(n),q(n+1), q(n+2), q(n+3), q(n+4)\right)>0.$$
For the remaining cases $229\le n\le5018$, we verified with Mathematica.\qed

{\noindent\it Proof of Theorem \ref{A-r-thm}.}
Making the shift $n\to n+1$, it is equivalent to show that for $n\ge 278$,
$$4\left(1+\frac{\pi^2}{32(n+1)^3}\right)q(n+1)q(n+3)>q(n)q(n+4)+3q(n+2)^2.$$
Since for $n\ge 1$,
$$\frac{\pi^2}{32(n+1)^3}\ge \frac{\pi^2}{32n^3}-\frac{1}{n^{\frac 72}},$$
thus it is suffices to show
$$4\left(1+\frac{\pi^2}{32n^3}-\frac{1}{n^{\frac 72}}\right)q(n+1)q(n+3)>q(n)q(n+4)+3q(n+2)^2.$$
Therefore, following \eqref{Maintheqn1}, we need to prove
\begin{align}\label{ineq2}
4\left(1+\frac{\pi^2}{32n^3}-\frac{1}{n^{\frac 72}}\right)L(n,1,N)L(N,3,N)>U(n,0,N)U(n,4,N)+3U^2(n,2,N).
\end{align}
Choosing $N=14$ and with \texttt{Reduce} command in Mathematica, for $n\ge 5885$, we have \eqref{ineq2}. Computing
$$\underset{s\in\{0,1,2,3,4\}}\max\{n(14,s)\}\le 5019,$$
we finally conclude that for $n\ge 5885$,
$$4\left(1+\frac{\pi^2}{32(n+1)^3}\right)q(n+1)q(n+3)>q(n)q(n+4)+3q(n+2)^2.$$
For the remaining cases $278\le n\le 5884$, we verified with Mathematica.\qed

{\noindent\it Proof of Theorem \ref{B-thm}.}
Making the shift $n\to n+1$, it is equivalent to show that for $n\ge 271$,
$$B\left(q(n),q(n+1), q(n+2), q(n+3), q(n+4)\right)>0.$$
Following \eqref{Maintheqn1}, we have
\begin{align*}
B\left(q(n),q(n+1), q(n+2), q(n+3), q(n+4)\right)&\\
&\hspace{-7 cm}>\left(\frac{e^{\pi\sqrt{\frac{n}{3}}}}{4\cdot3^{1/4}n^{3/4}}\right)^2\Biggl(L^3(n,2,N)+L(n,0,N)L^2(n,3,N)+L^2(n,1,N)L(n,4,N)\\
&\hspace{-5 cm}-U(n,0,N)U(n,2,N)U(n,4,N)-2U(n,1,N)U(n,2,N)U(n,3,N)\Biggr).
\end{align*}
Thus to prove the theorem, we show that
\begin{align}\label{ineq3}
L^3(n,2,N)+L(n,0,N)L^2(n,3,N)+L^2(n,1,N)L(n,4,N)&\nonumber\\
&\hspace{-8 cm}>U(n,0,N)U(n,2,N)U(n,4,N)+2U(n,1,N)U(n,2,N)U(n,3,N).
\end{align}
Choosing $N=24$ and with \texttt{Reduce} command in Mathematica, for $n\ge 9800$, we have \eqref{ineq3}. Computing
$$\underset{s\in\{0,1,2,3,4\}}\max\{n(24,s)\}\le 18502,$$
we finally conclude that for $n\ge 18502$,
$$B\left(q(n),q(n+1), q(n+2), q(n+3), q(n+4)\right)>0.$$
For the remaining cases $271\le n\le18501$, we verified with Mathematica.\qed

{\noindent\it Proof of Theorem \ref{B-r-thm}.}
Making the shift $n\to n+1$, it is equivalent to show that for $n\ge 308$,
\begin{align*}
	&\left(1+\frac{\pi^3}{288\sqrt{3}(n+1)^{\frac{9}{2}}}\right)
\left(2q(n+1)q(n+2)q(n+3)+q(n)q(n+2)q(n+4)\right)
\\
&\hspace{5cm}>q(n+2)^3+q(n)q(n+3)^2+q(n+1)^2q(n+4).
\end{align*}
Since for $n\ge 1$,
$$\frac{\pi^3}{288\sqrt{3}(n+1)^{\frac 92}}\ge \frac{\pi^3}{288\sqrt{3}n^{\frac 92}}-\frac{1}{4 n^5},$$
thus it is suffices to show
\begin{align*}
	&\left(1+\frac{\pi^3}{288\sqrt{3}n^{\frac{9}{2}}}-\frac{1}{4n^5}\right)
\left(2q(n+1)q(n+2)q(n+3)+q(n)q(n+2)q(n+4)\right)
\\
&\hspace{5cm}>q(n+2)^3+q(n)q(n+3)^2+q(n+1)^2q(n+4).
\end{align*}
Therefore, following \eqref{Maintheqn1}, we need to prove
\begin{align}\label{ineq4}
&\left(1+\frac{\pi^3}{288\sqrt{3}n^{\frac{9}{2}}}-\frac{1}{4n^5}\right)\left(L(n,0,N)L(n,2,N)L(n,4,N)+2L(n,1,N)L(n,2,N)L(n,3,N)\right)\nonumber\\
&\hspace{3 cm}>U^3(n,2,N)+U(n,0,N)U^2(n,3,N)+U^2(n,1,N)U(n,4,N).
\end{align}
Choosing $N=24$ and with \texttt{Reduce} command in Mathematica, for $n\ge 18225$, we have \eqref{ineq4}. Computing
$$\underset{s\in\{0,1,2,3,4\}}\max\{n(24,s)\}\le 18502,$$
we finally conclude that for $n\ge 18502$,
\begin{align*}
&\left(1+\frac{\pi^3}{288\sqrt{3}(n+1)^{\frac{9}{2}}}\right)
\left(2q(n+1)q(n+2)q(n+3)+q(n)q(n+2)q(n+4)\right)
\\
&\hspace{5cm}>q(n+2)^3+q(n)q(n+3)^2+q(n+1)^2q(n+4).
\end{align*}
For the remaining cases $308\le n\le 18501$, we verified with Mathematica.\qed

{\noindent\it Proof of Theorem \ref{d-turan}.}
Making the shift $n\to n+2$, it is equivalent to show that for $n\ge 271$,
\begin{align*}
\left({q(n+2)}^2-q(n+1)q(n+3)\right)^2
>\left({q(n+1)}^2-q(n)q(n+2)\right)\left({q(n+3)}^2-q(n+2)q(n+4)\right).
\end{align*}
Following \eqref{Maintheqn1}, it is enough to show that
\begin{align}\label{ineq5}
\left(L^2(n,2,N)-U(n,1,N)U(n,3,N)\right)^2&>\left(U^2(n,1,N)-L(n,0,N)L(n,2,N)\right)\nonumber\\
&\hspace{1 cm}\times\left(U^2(n,3,N)-L(n,2,N)L(n,4,N)\right).
\end{align}
Choosing $N=14$ and with \texttt{Reduce} command in Mathematica, for $n\ge 3153$, we have \eqref{ineq5}. Computing
$$\underset{s\in\{0,1,2,3,4\}}\max\{n(14,s)\}\le 5019,$$
we finally conclude that for $n\ge 5019$,
$$\left({q(n+2)}^2-q(n+1)q(n+3)\right)^2
>\left({q(n+1)}^2-q(n)q(n+2)\right)\left({q(n+3)}^2-q(n+2)q(n+4)\right).$$
For the remaining cases $271\le n\le 5018$, we verified with Mathematica.\qed

{\noindent\it Proof of Theorem \ref{c-d-turan}.}
Making the shift $n\to n+2$, it is equivalent to show that for $n\ge 344$,
\begin{align*}
&\left({q(n+1)}^2-q(n)q(n+3)\right)^2
\\
&\qquad\qquad<\left({q(n+1)}^2-q(n)q(n+2)\right)
\left({q(n+3)}^2-q(n+2)q(n+4)\right)\left(1+\frac{\pi}{2\sqrt{3}(n+1)^{\frac 32}}\right).
\end{align*}
Since for $n\ge 1$,
$$\frac{\pi}{2\sqrt{3}(n+1)^{\frac 32}}\ge \frac{\pi}{2\sqrt{3}n^{\frac 32}}-\frac{1}{ n^2},$$
thus it is suffices to show
\begin{align*}
&\left({q(n+1)}^2-q(n)q(n+3)\right)^2
\\
&\qquad\qquad<\left({q(n+1)}^2-q(n)q(n+2)\right)
\left({q(n+3)}^2-q(n+2)q(n+4)\right)\left(1+\frac{\pi}{2\sqrt{3}n^{\frac 32}}-\frac{1}{ n^2}\right).
\end{align*}
Therefore, following \eqref{Maintheqn1}, we need to prove
\begin{align}\label{ineq6}
&\left(1+\frac{\pi}{2\sqrt{3}n^{\frac 32}}-\frac{1}{ n^2}\right)\left(L^2(n,1,N)-U(n,0,N)U(n,2,N)\right)\left(L^2(n,3,N)-U(n,2,N)U(n,4,N)\right)\nonumber\\
&\hspace{7 cm}>\left(U^2(n,2,N)-L(n,1,N)L(n,3,N)\right)^2.
\end{align}
Choosing $N=14$ and with \texttt{Reduce} command in Mathematica, for $n\ge 7056$, we have \eqref{ineq6}. Computing
$$\underset{s\in\{0,1,2,3,4\}}\max\{n(14,s)\}\le 5019,$$
we finally conclude that for $n\ge 7056$,
\begin{align*}
&\left({q(n+1)}^2-q(n)q(n+3)\right)^2
\\
&\qquad\qquad<\left({q(n+1)}^2-q(n)q(n+2)\right)
\left({q(n+3)}^2-q(n+2)q(n+4)\right)\left(1+\frac{\pi}{2\sqrt{3}(n+1)^{\frac 32}}\right).
\end{align*}
For the remaining cases $344\le n\le 7055$, we verified with Mathematica.\qed

{\noindent\it Proof of Theorem \ref{3-laguerre}.}
According to \eqref{Maintheqn1}, it is sufficient to show that
\begin{align}\label{ineq-L3}
10L^2(n,3,N)+6L(n,1,N)L(n,5,N)>15U(n,2,N)U(n,4,N)+U(n,0,N)U(n,6,N).
\end{align}
Choosing $N=24$ and with \texttt{Reduce} command in Mathematica, for $n\ge 12884$, we have \eqref{ineq-L3}. Computing
$$\underset{s\in\{0,1,2,3,4,5,6\}}\max\{n(24,s)\}\le 18502,$$
we finally conclude that for $n\ge 18502$,
\begin{align*}
 10{q(n+3)}^2+6q(n+1)q(n+5)>15q(n+2)q(n+4)+q(n)q(n+6).
\end{align*}
For the remaining cases $651\le n\le 18501$, we verified with Mathematica.\qed

{\noindent\it Proof of Theorem \ref{c-3-laguerre}.}
Based on \eqref{Maintheqn1}, we need to prove
\begin{align}\label{ineq-c-L3}
&\left(1+\frac{5\pi^3}{256\sqrt{3}n^{\frac{9}{2}}}\right)
\left(15L(n,2,N)L(n,4,N)+L(n,0,N)L(n,6,N)\right)
\nonumber\\
&\hspace{7 cm}>10U^2(n,3,N)+6U(n,1,N)U(n,5,N).
\end{align}
Choosing $N=24$ and with \texttt{Reduce} command in Mathematica, for $n\ge 17880$, we have \eqref{ineq-c-L3}. Computing
$$\underset{s\in\{0,1,2,3,4,5,6\}}\max\{n(24,s)\}\le 18502,$$
we finally conclude that for $n\ge 18502$,
\begin{align*}
 10{q(n+3)}^2&+6q(n+1)q(n+5)
 \\
 &<\left(15q(n+2)q(n+4)+q(n)q(n+6)\right)
 \left(1+\frac{5\pi^3}{256\sqrt{3}n^{\frac{9}{2}}}\right).
\end{align*}
For the remaining cases $715\le n\le 18501$, we verified with Mathematica.\qed

\vspace{0.5cm}
\baselineskip 15pt
{\noindent\bf\large{\ Acknowledgements}} \vspace{7pt} \par

We wish to thank the referee for valuable suggestions. The first author would like to thank the institute for its hospitality and support. The second author is supported by the National Natural Science Foundation of China (Grant No. 12371327).

\end{document}